
\documentclass{amsart}
\usepackage[foot]{amsaddr}
\usepackage{amssymb,amsfonts,amsmath,amsthm,amscd,cite,stmaryrd,dsfont,esint,graphicx,hyperref,standalone,tikz,upgreek,xcolor}
\usepackage[title]{appendix}
\usepackage[mathcal]{euscript}

\usepackage{fullpage}
\usetikzlibrary{positioning}

\theoremstyle{plain}
\newtheorem{thm}{Theorem}
\newtheorem{cor}{Corollary}
\newtheorem{lem}[cor]{Lemma}
\newtheorem{prop}[cor]{Proposition}

\theoremstyle{definition}
\newtheorem{defn}[cor]{Definition}

{}
\let\P\undefined{}

\DeclareMathOperator{\rot}{rot}

\DeclareMathOperator*{\argmin}{arg\,min}
\DeclareMathOperator{\Homeo}{Homeo}

\DeclareMathOperator{\Leb}{Leb}

\DeclareMathOperator{\radius}{radius}

\DeclareMathOperator{\dist}{dist}

\DeclareMathOperator{\SL}{SL}
\DeclareMathOperator{\mix}{mix}

\newcommand{\R}{\mathbb{R}}
\newcommand{\Z}{\mathbb{Z}}
\newcommand{\N}{\mathbb{N}}
\newcommand{\Q}{\mathbb{Q}}
\newcommand{\E}{\mathbb{E}}
\newcommand{\P}{\mathbb{P}}
\renewcommand{\tilde}{\widetilde}
\renewcommand{\hat}{\widehat}
\numberwithin{equation}{section}


\title{Exponential mixing by shear flows}
\author{William Cooperman}
\address{University of Chicago, Department of Mathematics. Chicago, Illinois}
\email{billc@uchicago.edu}
\begin{document}
\begin{abstract}
    We prove a version of Bressan's mixing conjecture where the advecting field is constrained to be a shear at each time. Also, inspired by recent work of Blumenthal\textendash{}Coti~Zelati\textendash{}Gvalani, we construct a particularly simple example of a shear flow which mixes at the optimal rate. The constructed vector field alternates randomly in time between just two distinct shears.
\end{abstract}
\keywords{mixing, transport, shear}
\subjclass{35Q49 (Primary) 37H05 (Secondary)}
\maketitle

\section{Introduction}
Given a divergence-free vector field $b \colon \R^2 / {(2\pi\Z)}^2 \to \R^2$ on the torus, we are interested in how effectively some mean-zero initial data $u_0$ is mixed when advected by $b$. By solving the transport equation
    \begin{equation}\label{eq:transport}
        \begin{cases}
            D_t u(t, x) + b(t, x) \cdot D_x u(t, x) = 0 &\qquad \text{ for $t > 0$ and $x \in \R^2 / {(2\pi\Z)}^2$}\\
            u(0, x) = u_0(x) &\qquad \text{ for $x \in \R^2 / {(2\pi\Z)}^2$.}
        \end{cases}
    \end{equation}
we can measure mixing by taking various measurements of $u(1, \cdot)$.

There are a few natural ways to measure mixing, two of which we list here. The functional mixing scale measures $\|u(1, \cdot)\|_{H^{-1}}$ or some other negative Sobolev norm. There are several works studying how bounds on the functional mixing scale depend on various norms of the advecting field, e.g.\ the energy, palenstrophy, etc.~\cite{OptimalStirring, Zillinger, CrippaLucaSchulze, IyerKiselevXu, ThiffeaultMultiscale, MathewMezicPetzold, ZillingerComparison, Anomalous} On the other hand, the (related, but not equivalent) geometric mixing scale measures the size of the largest ball $B$ on which the mean $\left|\frac{1}{|B|} \int_B u(1, x) \; \mathrm{d}x\right|$ is greater than $1$ (or any other fixed constant). Bressan~\cite{Bressan} conjectured that the geometric mixing scale is bounded below by $\exp(-C\|D_x b\|_{L^1})$ for some constant $C > 0$. By proving new estimates for the regular Lagrangian flow, Crippa\textendash{}De~Lellis~\cite{Crippa-de-Lellis} bounded the geometric mixing scale by $\exp(-C\|D_x b\|_{L^p})$ for all $p > 1$, where the constant $C > 0$ depends on $p$.

In this paper, we study the $p = 1$ case under the assumption that $b$ is a shear, that is, that at each time $t$, all the vectors $b(t, \cdot)$ are parallel. Since, for example, piecewise constant shears do not lie in $W^{1,p}$ for $p > 1$ unless they are constant everywhere, the transition from $p > 1$ to $p = 1$ leads to a qualitative change in the allowable vector fields $b$. Seeger\textendash{}Smart\textendash{}Street~\cite{Multilinear} proved general harmonic analysis estimates inspired by Bianchini's approach~\cite{Bianchini} to solve Bressan's conjecture in the one-dimensional case, and Had\v{z}i\'{c}\textendash{}Seeger\textendash{}Smart\textendash{}Street~\cite{HSSS} found applications of these estimates to bound the mixing scale. In~\cite{HSSS}, an example involving shears is given which presents a clear obstruction to extending Bianchini's approach to dimensions higher than one.
To better understand the difficulty, Had\v{z}i\'{c}\textendash{}Seeger\textendash{}Smart\textendash{}Street~\cite{HSSS} posed a simplified discrete version of Bressan's conjecture which allows only shears. We solve this simplified version with Theorem~\ref{thm:main-bound}.

First, we define the geometric mixing scale in Bressan's sense.

\begin{defn}
    Given a vector field $b \in L^1([0, 1]; W^{1,1}(\R^2 / {(2\pi\Z)}^2; \R^2))$, we define the mixing scale $\mix(b)$ by \[ \mix(b) := \sup \left\{\radius(B) \mid \left|\frac{1}{|B|}\int_B u(1, x) \; \mathrm{d}x\right| > 1 \right\}, \] where the supremum is over all balls $B \subseteq \R^2 / {(2\pi\Z)}^2$ and $u(t, x)$ is the solution to the transport equation~\eqref{eq:transport} with initial data $u_0 = 2\mathds{1}_{x_1 \leq \pi} - 2\mathds{1}_{x_1 > \pi}$.
\end{defn}

\begin{thm}\label{thm:main-bound}
    Let $b \in L^1([0, 1]; W^{1,1}(\R^2 / {(2\pi\Z)}^2; \R^2))$ be a divergence-free shear at every time $t \in [0, 1]$, that is, we assume that $b(t, x)$ is parallel to $b(t, y)$ for all $t \geq 0$ and $x, y \in \R^2 / {(2\pi\Z)}^2$. Then there is a constant $C > 0$ such that \[ |\log \mix(b)| \leq C\|D_x b\|_{L^1}. \]
\end{thm}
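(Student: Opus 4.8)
The plan is to reduce the estimate to a statement about how quickly the flow of $b$ can spread the interface of $u(1,\cdot)$, measured at the scale $r$, and then to control that spreading using the one‑dimensional structure of shears. We may assume $r:=\mix(b)$ is smaller than an absolute constant, since otherwise $|\log r|\le O(1)$ and the bound is trivial once $C$ is large. Write $c:=\|D_xb\|_{L^1}$ and let $\Phi_{s\to t}$ denote the (regular Lagrangian) flow of $b$, which for a shear field is well behaved. Then $u(1,\cdot)=u_0\circ\Phi_{0\to1}^{-1}$, so $\{u(1,\cdot)>0\}$ and $\{u(1,\cdot)<0\}$ are separated by the curve $\Gamma:=\Phi_{0\to1}(\Gamma_0)$, where $\Gamma_0:=\{x_1=0\}\cup\{x_1=\pi\}$ has length $4\pi$. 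By definition of $\mix(b)$, every ball $B$ of radius $>r$ satisfies $\bigl|\,|B|^{-1}\!\int_B u(1,\cdot)\,\bigr|\le1$ and is therefore not monochromatic, so $\Gamma$ meets every such ball; hence $\Gamma$ is $O(r)$‑dense in the torus. It thus suffices to show that a composition of shears of total cost $c$ cannot carry $\Gamma_0$ onto an $O(r)$‑dense curve unless $c\gtrsim\log(1/r)$.

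To quantify the spreading I would fix $\rho\asymp r$ and a grid of squares of side $\rho$ on the torus, and let $N(\gamma)$ be the number of grid squares met by a curve $\gamma$. I would then use three elementary facts: (a) $N(\Gamma_0)\lesssim 1/\rho$; (b) an $O(\rho)$‑dense curve meets a positive fraction of the $\asymp 1/\rho^2$ squares, so $N(\Gamma)\gtrsim 1/\rho^2$; and (c) a non‑contractible closed curve must wind around the torus and so spans all values of some coordinate, whence $N(\gamma)\gtrsim 1/\rho$. Since $\Phi_{0\to t}(\Gamma_0)$ is a homeomorphic image of $\Gamma_0$, it stays non‑contractible, so (c) holds along the whole flow. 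The goal is then the growth estimate $N\bigl(\Phi_{0\to1}(\Gamma_0)\bigr)\le N(\Gamma_0)\,e^{Cc}$: combined with (a) and (b) it yields $\rho^{-2}\lesssim N(\Gamma)\le\rho^{-1}e^{Cc}$, i.e.\ $\log(1/\rho)\lesssim c$, which is the theorem (modulo the harmless additive constant from $\rho\asymp r$).

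Ideally the growth estimate would follow from a per‑shear bound $N(\Psi\gamma)\le N(\gamma)\,e^{C\kappa}$ for $\Psi=\Phi_{s\to t}$ of cost $\kappa:=\int_s^t\|D_xb(\tau,\cdot)\|_{L^1}\,d\tau$, by telescoping over a fine partition of $[0,1]$ and taking logarithms. Because a shear $x\mapsto x+H(x\cdot e^{\perp})e$ moves each square $Q$ within a sheared square of unchanged transverse extent and of extent $\lesssim\rho+\operatorname{osc}_{I_Q}H$ in the direction $e$ (with $|I_Q|\lesssim\rho$), one would group the squares met by $\gamma$ into transverse bands and use both that $\operatorname{osc}_{I_Q}H$ is controlled by the variation of $H$ on that band and that a band of width $\rho$ can hold only $\lesssim 1/\rho$ squares at all; this gives a bound of the shape $N(\Psi\gamma)\lesssim N(\gamma)+\kappa/\rho$, which together with (c) upgrades to the desired multiplicative bound.

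The hard part will be that this per‑shear reasoning is lossy precisely when the variation of $H$ concentrates in a transverse band where $\gamma$ is already dense: a shear of cost $o(1)$ can then fill that band and multiply $N(\gamma)$ by a constant, so no honest per‑shear estimate of the above form survives at the endpoint $p=1$ (this is the same phenomenon that makes Bressan's conjecture open for $p=1$ in general). The resolution must instead amortize the effect of the shears across the entire flow — tracking dynamically how much room the curve has left in each transverse band — and it is exactly here that one‑dimensional harmonic‑analytic estimates in the spirit of Seeger--Smart--Street~\cite{Multilinear}, which replace the two‑dimensional maximal‑function bounds that fail at $p=1$, must enter; an additional subtlety is that for a general rational shear direction the relevant transverse coordinate at time $\tau$ is scrambled by the earlier part of the flow. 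I expect this amortized growth estimate to be the technical core of the argument.
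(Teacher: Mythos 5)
There is a genuine gap, and you have named it yourself: the multiplicative growth estimate $N(\Phi_{0\to 1}(\Gamma_0))\le N(\Gamma_0)e^{Cc}$ is the entire content of the theorem, and your proposal does not prove it. The per-shear route you sketch cannot be repaired into it: box-counting against a fixed grid is not even stable under zero-cost maps (a constant shear, i.e.\ a rigid translation with $\|D_xb\|_{L^1}=0$, can double the number of grid squares met by a curve whose arcs sit just inside grid lines), and any translation-robust variant (covering numbers at scale $\rho$) inevitably loses a fixed constant factor per composition step; since $b$ may consist of arbitrarily many arbitrarily cheap shears, telescoping such bounds over a fine partition of $[0,1]$ produces $C^{M}$ with $M$ unbounded, independent of the cost $c$. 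The amortized estimate you then invoke --- tracking dynamically how much room the curve has in each transverse band, with the transverse coordinate scrambled by the earlier flow, via one-dimensional estimates in the spirit of Seeger--Smart--Street --- is exactly the difficulty of the $p=1$ problem and is left entirely unexecuted, so the proposal is a reduction of the theorem to an unproved statement that is at least as hard as the theorem itself. (The soft reductions you do carry out --- interface density from the definition of $\mix$, the counts (a)--(c) --- are fine but are the easy part.)

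For comparison, the paper avoids any per-step or amortized counting by running a Crippa--De~Lellis/De~Lellis-type Lagrangian energy argument: with $L=\{0<x_1<\pi\}$ it tracks $E(t)=\int_L\bigl|\log\dist(\Phi(t,x),\partial\Phi(t,L))\bigr|\,\mathrm{d}x$, notes $E(0)\le C$ and $E(1)\ge C^{-1}|\log\varepsilon|$ when $\mix(b)\le\varepsilon$, and proves the differential inequality $E'(t)\le C\|D_xb(t,\cdot)\|_{L^1}$. The shear structure enters pointwise rather than through any multi-scale bookkeeping: writing $d(t,x)$ for the distance to $\partial\Phi(t,L)$ and $\alpha(x)$ for the offset of the nearest boundary point in the invariant direction of the shear, the relative velocity of that nearest point is nearly orthogonal to the separation vector when $|\alpha(x)|$ is comparable to $d(t,x)$, which yields the kernel bound $\sqrt{d^2-\alpha^2}/d^2\lesssim\sqrt{d_{\min}}\,|x_1-\gamma|^{-3/2}$ and hence a uniformly bounded inner integral against $|D_xb_2(t,\gamma)|$. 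This is why no harmonic-analysis input of Seeger--Smart--Street type is needed there, whereas your outline cannot close without it.
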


Our second result is an example of a particularly simple shear flow which mixes at the optimal rate. The argument follows the path of the recent work of Blumenthal\textendash{}Coti~Zelati\textendash{}Gvalani~\cite{BCZG}. To explain, we define the horizontal sine field \[ b_{\text{horiz}}(x) := (\sin(x_2), 0) \] and the vertical sine field \[ b_{\text{vert}}(x) := (0, \sin(x_1)). \]
In~\cite{BCZG}, Blumenthal\textendash{}Coti~Zelati\textendash{}Gvalani establish a general framework to show that the vector field \[ b^{\bar{\omega}}(t, x) := \begin{cases} b_{\text{horiz}}(x_1, x_2+\omega_{\lceil t \rceil}) &\qquad \text{ if $\lceil t \rceil$ is odd}\\ b_{\text{vert}}(x_1+\omega_{\lceil t \rceil}, x_2) &\qquad \text{ if $\lceil t \rceil$ is even}\end{cases} \] almost surely mixes at an exponential rate, where the numbers $\bar{\omega} := (\omega_1, \omega_2, \dots)$ are chosen independently and uniformly at random from $[0, 2\pi]$, which proves a conjecture of Pierrehumbert~\cite{Pierrehumbert}.

    In this paper, we define instead the vector field \[ b^{\bar{\tau}}(t, x) := \begin{cases} \tau_{\lceil t \rceil}b_{\text{horiz}}(x) &\qquad \text{ if $\lceil t \rceil$ is odd}\\ \tau_{\lceil t \rceil}b_{\text{vert}}(x) &\qquad \text{ if $\lceil t \rceil$ is even}\end{cases} \] where the numbers $\bar{\tau} = (\tau_1, \tau_2, \dots)$ are chosen independently and uniformly at random from $[0, T]$ for some large $T > 0$. In other words, we choose to randomize the duration for which each sine field runs, instead of randomizing the phase (for simplicity we choose to randomize the magnitude instead, but by rescaling time this is equivalent to randomizing duration). This modification creates some complications; namely, there are fixed points under the flow of $b^{\bar{\tau}}$. Indeed, $b^{\bar{\tau}}(x) = 0$ for any $x \in {\{0, \pi\}}^2$. To deal with this, we consider mixing on the space $(\R^2 / {(2\pi\Z)}^2) \setminus {\{0, \pi\}}^2$, and introduce some new arguments to handle the lack of compactness caused by removing the fixed points.

\begin{figure}[htb]
    \centering
    \includegraphics{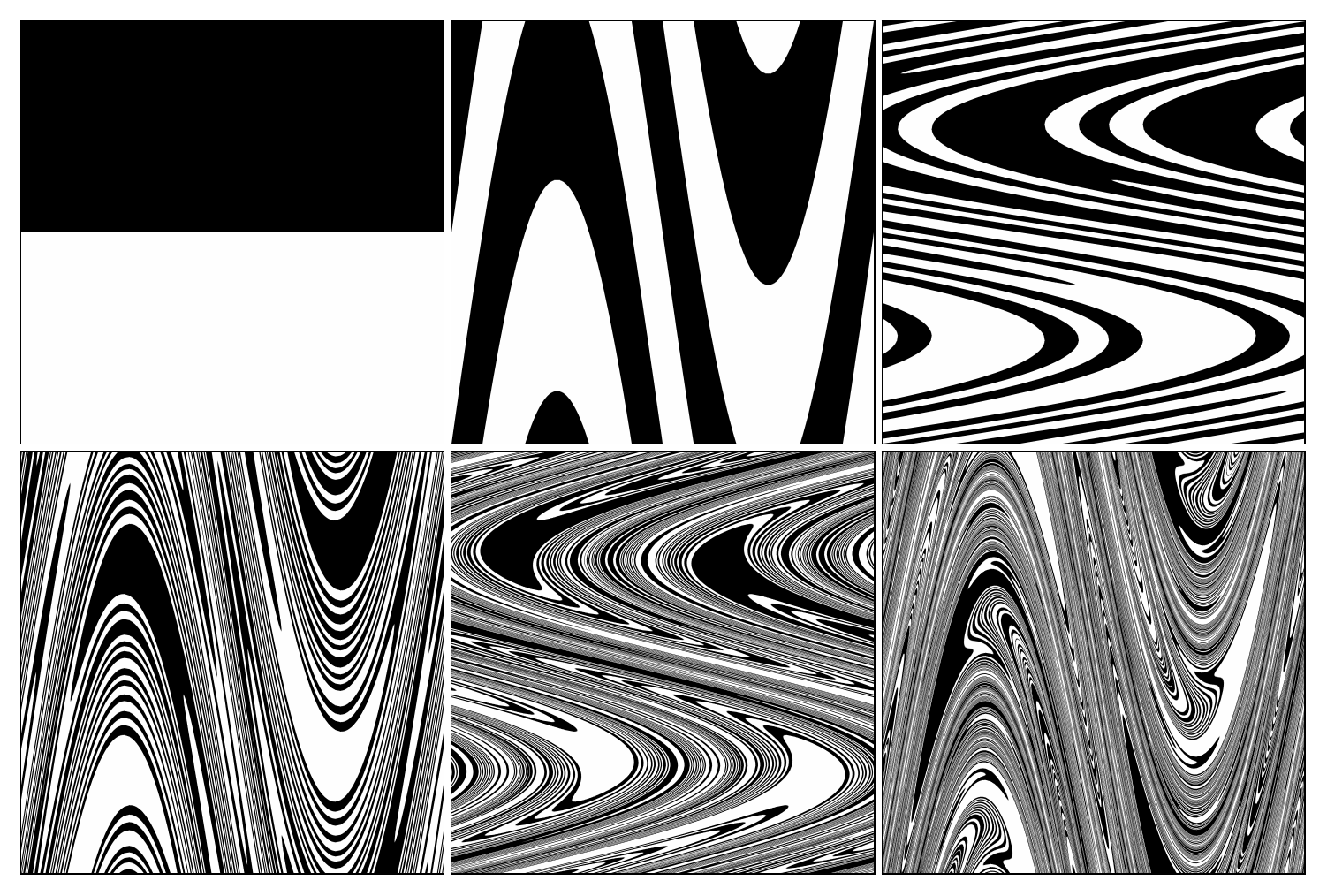}
    \caption{The flow with random durations}
\end{figure}

To make our notation match with the mixing scale, we write $b^{\bar{\tau}}_\alpha(t, x) := \alpha b^{\bar{\tau}}(\alpha t, x)$ to denote speeding time up by a factor of $\alpha$, and we write $u^{\bar{\tau}}$ to denote the solution to~\eqref{eq:transport} with $b = b^{\bar{\tau}}$.
\begin{thm}\label{thm:main-example}
    If $T > 0$ is sufficiently large and $\bar{\tau} = (\tau_1, \tau_2, \dots)$ is a random sequence chosen independently and uniformly from $[0, T]$, then there is a random variable $c = c(T) > 0$, which is positive almost surely, such that \[ |\log \mix(b^{\bar{\tau}}_\alpha)| \geq c \|D_x b^{\bar{\tau}}_\alpha\|_{L^1} \] for any $\alpha > 0$ almost surely.

    More generally, for any initial data $u_0 \in L^\infty(\R^2 / {(2\pi\Z)}^2)$ with mean zero, if $u$ solves~\eqref{eq:transport} then there is a random variable $C = C(u_0, T) > 0$ which is finite almost surely and satisfies \[ \sup \left\{ t > 0 \mid \frac{1}{|B|}\int_B u^{\bar{\tau}}(t, x) \; \mathrm{d}x > 1 \right\} \leq C_0(\bar{\tau})\log(|B|) \] for all balls $B \subseteq \R^2 / {(2\pi\Z)}^2$ and almost every $0 \leq \bar{\tau} \leq T$.
\end{thm}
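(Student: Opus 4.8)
The plan is to run the random dynamical systems machinery of Blumenthal--Coti~Zelati--Gvalani~\cite{BCZG} on the non-compact phase space $M := (\R^2/(2\pi\Z)^2)\setminus{\{0,\pi\}}^2$ obtained by deleting the common fixed points of the flow. Writing $\Phi^{\mathrm o}_\tau(x_1,x_2) = (x_1+\tau\sin x_2,\,x_2)$ and $\Phi^{\mathrm e}_\tau(x_1,x_2) = (x_1,\,x_2+\tau\sin x_1)$, the time-$2$ flow of $b^{\bar\tau}$ is $\psi_k := \Phi^{\mathrm e}_{\tau_{2k}}\circ\Phi^{\mathrm o}_{\tau_{2k-1}}$, and since the $\tau_n$ are i.i.d.\ uniform on $[0,T]$ the maps $\psi_1,\psi_2,\dots$ form an i.i.d.\ sequence of volume-preserving diffeomorphisms of $M$ (one checks that $M$ is invariant and that ${\{0,\pi\}}^2$ is exactly the fixed-point set of the flow, as noted in the introduction); the resulting Markov chain on $M$ has Lebesgue measure as a stationary measure. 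Following~\cite{BCZG}, almost-sure exponential mixing of $u^{\bar\tau}$ will follow once we establish (i) strict positivity of the top Lyapunov exponent of the derivative cocycle and (ii) quantitative geometric ergodicity of the associated one-point, two-point, and projective Markov chains. The new ingredient relative to~\cite{BCZG} is that (ii) demands a Lyapunov function controlling excursions of the chain toward ${\{0,\pi\}}^2$, where the dynamics degenerate.

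For (i), the derivative cocycle is generated by the matrices $\bigl(\begin{smallmatrix}1 & \tau\cos x_2\\ 0 & 1\end{smallmatrix}\bigr)$ and $\bigl(\begin{smallmatrix}1 & 0\\ \tau\cos x_1 & 1\end{smallmatrix}\bigr)$, which (as $\tau$ and $x$ vary) generate $\SL_2(\R)$ and leave no finite subset of $\mathbb{P}^1$ invariant, so Furstenberg's criterion provides a unique non-atomic stationary measure $\nu$ on $M\times\mathbb{P}^1$ for the projectivized chain and $\lambda := \int\log\|\cdot\|\,\mathrm{d}\nu>0$. For the minorization half of (ii), the point is that $(\tau_{2k-1},\tau_{2k})$ is a genuine two-parameter family: a H\"ormander-type bracket computation shows that after a bounded number of steps the transition kernel has a density bounded below on compact subsets of $M$, so that — provided the Lyapunov function below has compact sublevel sets — Harris's theorem applies.

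The heart of the matter, and the step I expect to be the main obstacle, is the construction of the Lyapunov function and the verification of its drift near ${\{0,\pi\}}^2$. I would take $V(x)\asymp -\log\dist(x,{\{0,\pi\}}^2)$ (so that the sublevel sets $\{V\le R\}$ are compact in $M$) and aim for $\E[e^{\eta V(\psi_k x)}\mid x]\le\theta\,e^{\eta V(x)}+K$ with $\theta<1$ for some small $\eta>0$. Near a fixed point $\psi_k$ is a $C^\infty$ perturbation of its linear part, a matrix of the shape $\bigl(\begin{smallmatrix}1 & \pm\tau_{2k-1}\\ \pm\tau_{2k} & 1\pm\tau_{2k-1}\tau_{2k}\end{smallmatrix}\bigr)\in\SL_2(\R)$, and the drift reduces to the local Furstenberg estimate $\E_\tau[\|A_\tau e\|^{-\eta}]<1$ uniformly over unit vectors $e$, which holds once $\E_\tau[\log\|A_\tau e\|]$ is bounded below by a positive constant uniformly in $e$. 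This is exactly where the hypothesis that $T$ is large is used: for small $\tau_{2k-1}\tau_{2k}$ some of these linearizations are elliptic rather than hyperbolic, but for $T$ large the typical product $\tau_{2k-1}\tau_{2k}\sim T^2$ is large, the matrices are strongly hyperbolic, and $\E_\tau[\log\|A_\tau e\|]\to\infty$ as $T\to\infty$ uniformly in $e$. Estimating the nonlinear remainder and using compactness of $\{V\le R\}$ to absorb the error away from ${\{0,\pi\}}^2$ then yields the drift. Running the same scheme for the two-point chain on $(M\times M)\setminus\Delta$ with $W(x,y)\asymp -\log\dist(x,y)+V(x)+V(y)$, where the positive exponent $\lambda$ supplies the contraction transverse to the diagonal, gives geometric ergodicity of $W$-weighted observables together with $\iint W<\infty$.

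Granting (i) and (ii), the conclusion is standard. For the transported data one has $\|u^{\bar\tau}(t,\cdot)\|_{\dot{H}^{-1}}^2 = \iint u_0(x)\,u_0(y)\,G(\phi_t x,\phi_t y)\,\mathrm{d}x\,\mathrm{d}y$, where $G$ is the mean-zero Green's function of $-\Delta$ on the torus, which satisfies $|G(x,y)|\le C(1+|\log\dist(x,y)|)\le C'(1+W(x,y))$; geometric ergodicity of the two-point chain gives $|\E\,G(\phi_t x,\phi_t y)|\le C\rho^{\,t}(1+W(x,y))$ for some $\rho<1$, hence $\E\,\|u^{\bar\tau}(t)\|_{\dot{H}^{-1}}^2\le C\|u_0\|_{L^\infty}^2\,\rho^{\,t}\iint(1+W)<\infty$, and a Markov/Borel--Cantelli argument upgrades this to: almost surely there are a deterministic $\gamma>0$ and a finite random constant $C=C(u_0,\bar\tau)$ with $\|u^{\bar\tau}(t,\cdot)\|_{\dot{H}^{-1}}\le Ce^{-\gamma t}$ for all $t$ (the flow between integer times being a shear of bounded distortion in $\dot{H}^{-1}$). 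Finally, the elementary interpolation $\bigl|\tfrac{1}{|B|}\int_B v\bigr|\lesssim|B|^{-1/2}\|v\|_{\dot{H}^{-1}}^{1/2}\|v\|_{L^\infty}^{1/2}$ — obtained by splitting $\mathds{1}_B$ into a mollification plus a remainder and optimizing the mollification scale — together with $\|u^{\bar\tau}(t)\|_{L^\infty}=\|u_0\|_{L^\infty}$, converts this into the stated geometric bound: $\tfrac{1}{|B|}\int_B u^{\bar\tau}(t)>1$ forces $t\le\gamma^{-1}\log(C'/|B|)$. Specializing to $u_0 = 2\mathds{1}_{x_1\le\pi}-2\mathds{1}_{x_1>\pi}$ and using $u^{\bar\tau}_\alpha(1,\cdot)=u^{\bar\tau}(\alpha,\cdot)$ together with $\|D_x b^{\bar\tau}_\alpha\|_{L^1}\le c_0T\lceil\alpha\rceil$ gives $|\log\mix(b^{\bar\tau}_\alpha)|\ge\tfrac{\gamma}{2}\alpha-C_1(\bar\tau)\ge c\,\|D_x b^{\bar\tau}_\alpha\|_{L^1}$ for all large $\alpha$, the remaining bounded range being handled by a routine continuity argument and a further shrinking of $c=c(T,\bar\tau)>0$; combined with Theorem~\ref{thm:main-bound}, this pins $\mix(b^{\bar\tau}_\alpha)$ to the exponential rate.
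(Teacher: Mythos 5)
Your proposal follows essentially the same route as the paper: delete the fixed set $F=\{0,\pi\}^2$, run the Blumenthal--Coti~Zelati--Gvalani Harris-theorem framework for the one-point, projective and two-point chains, and supply the genuinely new ingredient, namely a Lyapunov--Foster function blowing up at $F$ (your $e^{\eta V}=\dist(\cdot,F)^{-\eta}$ is the paper's $\max(x,y)^{-\alpha}$), with large $T$ entering exactly where you say it does, in the drift near $F$; your two-point weight combining a near-diagonal term (contraction supplied by $\lambda_1>0$) with $V(x)+V(y)$ is the paper's $W'=W+\alpha(V(x)+V(y))$, except the paper adds rather than multiplies the two pieces, which avoids cross terms in the drift. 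The one structurally different step is the endgame: you pass through $\dot H^{-1}$ decay of $u^{\bar\tau}$ plus an interpolation of ball averages between $\dot H^{-1}$ and $L^\infty$, whereas the paper's Proposition~\ref{prop:uge-implies-mixing} applies Chebyshev directly to the two-point correlation $\E[\varphi(n\cdot x)\varphi(n\cdot y)]$ integrated over $B\times B$, followed by a covering/union bound over balls and a Borel--Cantelli summation. Both endgames work; note only that the mollify-and-optimize argument you describe yields exponents of the form $\bigl|\frac{1}{|B|}\int_B v\bigr|\lesssim |B|^{-2/3}\|v\|_{\dot H^{-1}}^{2/3}\|v\|_{L^\infty}^{1/3}$ rather than the $1/2$'s you wrote, which is immaterial for the logarithmic conclusion.

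Two steps, however, are asserted in a form that would not survive as written. First, positivity of $\lambda_1$: the derivative matrices depend on the current position of the orbit, so the cocycle is not an i.i.d.\ matrix product and the classical Furstenberg criterion (``the matrices generate $\SL_2(\R)$ and preserve no finite set of lines'') does not literally apply. What is needed is the random-dynamical-systems version (Lemma~\ref{lem:lambda_1-condition}, i.e.\ Proposition~3.3 of~\cite{BCZG}), whose hypotheses --- a submersion condition for $\bar\tau^n\mapsto\Phi^{\bar\tau^n}_n(x)$ together with surjectivity of $D_{\bar\tau^n}D_x\Phi$ restricted to $\ker D_{\bar\tau^n}\Phi$ onto $T\SL_2(\R)$ --- the paper verifies by explicit derivative computations at a chosen $\bar\tau^n_\star$; some such concrete verification (and likewise an instantiation of your ``H\"ormander bracket'' minorization, which the paper gets from the same submersion lemma, plus explicit reachability arguments for topological irreducibility) is unavoidable. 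Second, the drift near $F$: the reduction to the linearized matrices $A_\tau$ only controls the regime $T\dist(x,F)\ll 1$; once $\tau_1\sin x_2\gtrsim 1$ the point leaves the linearization regime, can wrap around the torus, and can re-enter a small neighborhood of the same or another fixed point (landing with $x_1'$ near $0$ or $\pi$ while $x_2$ is unchanged and small), an event of probability $\sim\varepsilon/(T\dist(x,F))$ per $\varepsilon$-window that the ``local Furstenberg estimate'' for $A_\tau$ does not see; the paper's Lemma~\ref{lem:potential-exists} consists precisely of a case analysis of these events. Your remark about absorbing errors via the compact sublevel sets can be made to work --- wrap-around forces $\dist(x,F)\gtrsim 1/T$, which confines those $x$ to a compact subset of $X$, and one must additionally check $\sup_{x\in C}\E[V(1\cdot x)]<\infty$ on that set --- but as stated your reduction skips the cases that constitute the actual content of the drift estimate, which is where the quantitative largeness of $T$ is really spent.
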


Finally, we show that even if $T > 0$ is arbitrarily large, it is possible that $b^{\bar{\tau}}$ mixes at a subexponential rate if $\tau_n = T$ for all $n \in \N$, which answers a question of Blumenthal\textendash{}Coti~Zelati\textendash{}Gvalani (see Remark 1.2 of~\cite{BCZG}) in the negative.

\section{Shear flows can mix no faster than exponentially}
%
%
In this section, we prove Theorem~\ref{thm:main-bound}. Our argument is inspired by Crippa\textendash{}De~Lellis~\cite{Crippa-de-Lellis} and De~Lellis~\cite{De-Lellis}. First, define the flow induced by $b$ to be the function $\Phi \colon [0, 1] \times (\R^2 / {(2\pi\Z)}^2) \to \R^2 / {(2\pi\Z)}^2$, where $\Phi(\cdot, x)$ solves the ordinary differential equation \[ \begin{cases} D_t \Phi(t, x) = b(\Phi(t, x)) &\quad \text{for $t > 0$ and $x \in \R^2 / {(2\pi\Z)}^2$}\\ \Phi(0, x) = x &\quad \text{for $x \in \R^2 / {(2\pi\Z)}^2$}. \end{cases} \] We define a suitable notion of energy for the flow $\Phi$, and then show that it cannot grow too quickly compared to the bound on the advecting field $b$. In De~Lellis~\cite{De-Lellis}, the energy is chosen to be \[ \int_U \left|\log |\Phi(t, x)-\Phi(t, y)|\right| \; \mathrm{d}(x, y), \] where $U = \{(x, y) | x_1 < \pi \text{ and } y_1 \in (5\pi/4, 7\pi/4) \text{ and } |\Phi(1, x)-\Phi(1, y)| < \varepsilon\}$, which works for the $p > 1$ case. In our setting, we define the energy $E$ below to work for the $p=1$ case when $b$ is a shear. Instead of tracking distances between a fixed set of $(x, y)$ pairs, we track the distance from each $x$ to the closest $y$ which started on the ``other side of the torus'' (so $x_1 < \pi \leq y_1$).

We now turn to the proof. By a standard density argument, we assume without loss of generality that $b \in C^\infty([0, 1] \times (\R^2 / {(2\pi\Z)}^2); \R^2)$. Notationally, we write $C > 0$ to denote a constant which may change from line to line, and write $[a, b] = [b, a]$ interchangeably; both refer to the interval $[\min\{a, b\}, \max\{a, b\}]$.

Fix $\varepsilon > 0$ and suppose $\mix(b) \leq \varepsilon$. We define the energy $E \colon [0, 1] \to \R$ by \[ E(t) := \int_L |\log \dist(\Phi(t, x), \partial\Phi(t, L))| \; \mathrm{d}x, \] where $L := \{x \in \R^2 / {(2\pi\Z)}^2 \mid 0 < x_1 < \pi\}$. It's clear that $E(0) \leq C$ and, since $\mix(b) \leq \varepsilon$, that $E(1) \geq C^{-1} |\log \varepsilon|$ for some constant $C > 0$.

To prove Theorem~\ref{thm:main-bound}, we will show that $E'(t) \leq C\|D_x b(t, \cdot)\|_{L^1}$ for almost every $0 \leq t \leq 1$.
For ease of notation, let $d(t, x) := \dist(\Phi(t, x), \partial\Phi(t, L))$. Fix $0 \leq t \leq 1$ and assume, without loss of generality, that $b(t, x) = (0, b_2(t, x_1))$ (otherwise, a rotated version of the same argument goes through). Let $f \colon \Phi(t, L) \to \partial \Phi(t, L)$ be measurable such that, for any $y \not\in \Phi(t, L)$, either $|y-x| > |f(x)-x|$ or \[ |y-x|=|f(x)-x| \text{ and } (b(t,y)-b(t,x))\cdot(y-x) \geq (b(t, f(x))-b(t, x))\cdot(f(x)-x). \] Write \[ \alpha(x) := \argmin_{t \in {f(x)}_1-x_1 + 2\pi\Z} |t|. \]  Now we compute
\begin{align*}
    E'(t) &\leq \int_0^{2\pi} \int_0^{2\pi} \mathds{1}_{\Phi(t, L)}(x)\left|\frac{D_t d(t, x)}{d(t, x)}\right| \; \mathrm{d}x_1 \; \mathrm{d}x_2\\
    &\leq \int_0^{2\pi} \int_0^{2\pi} \mathds{1}_{\Phi(t, L)}(x)\left|\frac{(b(t, f(x))-b(t, x))\cdot(f(x)-x)}{{d(t, x)}^2}\right| \; \mathrm{d}x_1 \; \mathrm{d}x_2\\
    &\leq \int_0^{2\pi} \int_0^{2\pi} \mathds{1}_{\Phi(t, L)}(x)\left|\frac{(b_2(t, {f(x)}_1)-b_2(t, x_1))({f(x)}_2-x_2)}{{d(t, x)}^2}\right| \; \mathrm{d}x_1 \; \mathrm{d}x_2\\
    &\leq \int_0^{2\pi} \int_0^{2\pi} \mathds{1}_{\Phi(t, L)}(x) \left|\frac{{f(x)}_2-x_2}{{d(t, x)}^2}\right| \int_{x_1}^{x_1+\alpha(x)} D_{x_1} b_2(t, \gamma)\; \mathrm{d}\gamma \; \mathrm{d}x_1 \; \mathrm{d}x_2\\
    &\leq \int_0^{2\pi} \int_0^{2\pi} |D_x b_2(t, \gamma)| \int_0^{2\pi} \mathds{1}_{\Phi(t, L)}(x) \mathds{1}_{[x_1, x_1+\alpha(x)]}(\gamma) \left|\frac{{f(x)}_2-x_2}{{d(t, x)}^2}\right|\; \mathrm{d}x_1 \; \mathrm{d}\gamma \; \mathrm{d}x_2
\end{align*}

\begin{figure}
    \includegraphics[scale=0.5]{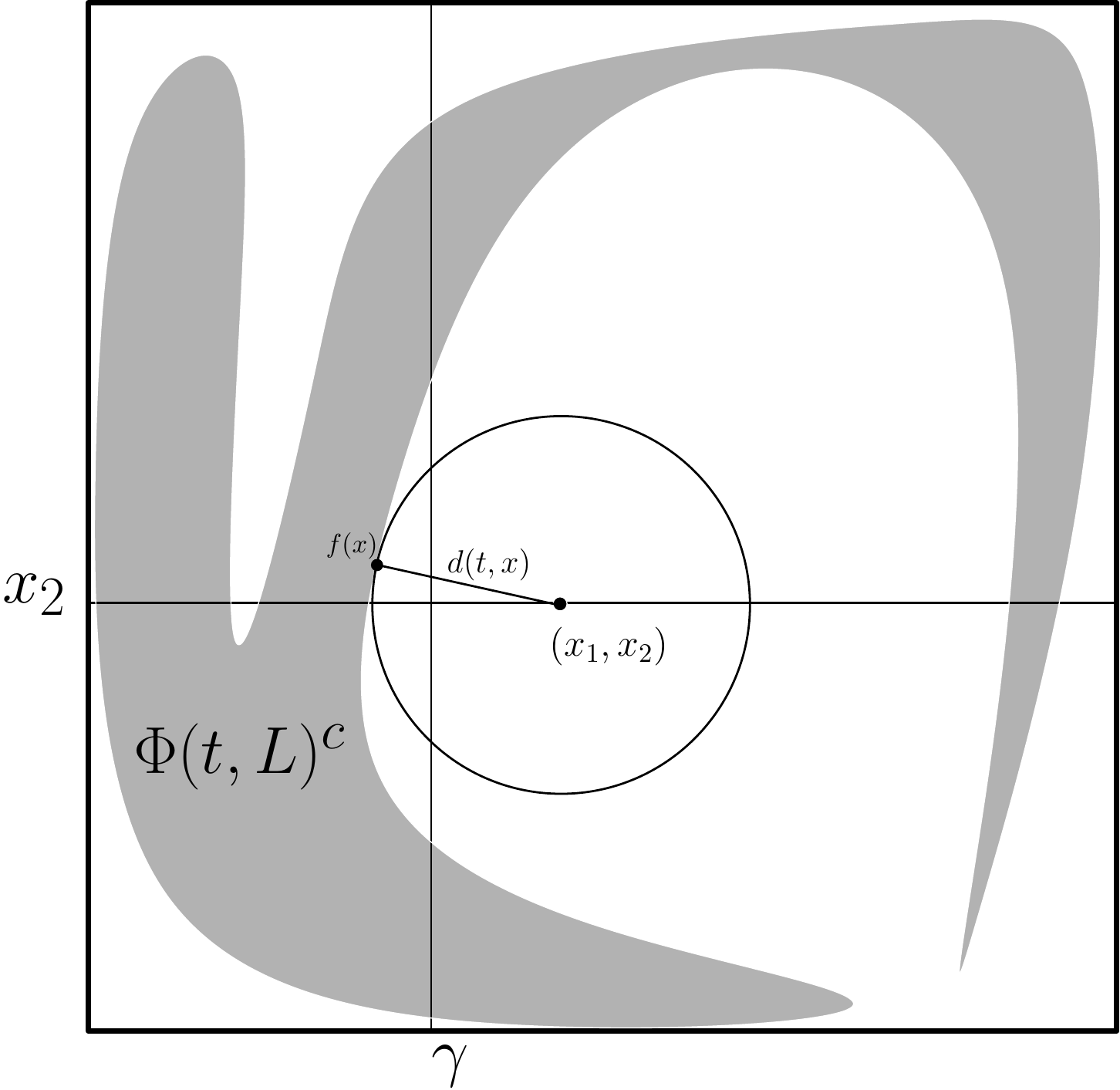}
    \caption{Our setup; the vector field $b$ is a vertical shear}
\end{figure}

From here, we note that \[ {d(t, x)}^2 = {({f(x)}_1-x_1)}^2 + {({f(x)}_2-x_2)}^2 = {\alpha(x)}^2 + {({f(x)}_2-x_2)}^2, \] so
\begin{align*}
    E'(t) &\leq \int_0^{2\pi} \int_0^{2\pi} |D_x b_2(t, \gamma)| \int_0^{2\pi} \mathds{1}_{\Phi(t, L)}(x) \mathds{1}_{[x_1, x_1+\alpha(x)]}(\gamma) \left|\frac{\sqrt{{d(t, x)}^2-{\alpha(x)}^2}}{{d(t, x)}^2}\right|\; \mathrm{d}x_1 \; \mathrm{d}\gamma \; \mathrm{d}x_2.\\
    &=: \int_0^{2\pi} \int_0^{2\pi} |D_x b_2(t, \gamma)| I(\gamma, x_2) \; \mathrm{d}\gamma \; \mathrm{d}x_2.
\end{align*}
It remains to show that $I(\gamma, x_2)$ is bounded by a constant for each $\gamma$ and $x_2$, so fix some $0 \leq \gamma, x_2 \leq 2\pi$ such that the integrand of $I(\gamma, x_2)$ is not identically zero. Note that the integrand is only nonzero when $(\gamma, x_2) \in B_{d(t, x)}(x) \subseteq \Phi(t, L)$, so $d(t, x) \geq d(t, (\gamma, x_2))/2 > 0$ whenever the integrand is nonzero (we take the convention that the integrand is zero whenever either indicator function is zero, regardless of the value of $d(t, x)$). In other words, whenever the integrand is nonzero, $d(t, x)$ is bounded away from zero. Let \[ d_\text{min} := \inf \{d(t, x) \mid x \in \Phi(t, L) \text{ and } \gamma \in [x_1, x_1 + \alpha(x)] \} \] and let $0 \leq y \leq 2\pi$ be such that $(y, x_2) \in \Phi(t, L)$ and $\gamma \in [y, y+\alpha(y, x_2)]$ and $d(t, (y, x_2)) \leq 2d_\text{min}$.

We break $I(\gamma, x_2)$ into two parts, depending on whether $|x_1-\gamma| \leq d(t, (y, x_2))$ or not:
\begin{align*}
    I(\gamma, x_2) &= \int_0^{2\pi} \mathds{1}_{[\gamma-d(t, (y, x_2)), \gamma+d(t, (y, x_2))]}(x_1)\mathds{1}_{\Phi(t, L)}(x) \mathds{1}_{[x_1, x_1+\alpha(x)]}(\gamma) \left|\frac{\sqrt{{d(t, x)}^2-{\alpha(x)}^2}}{{d(t, x)}^2}\right|\; \mathrm{d}x_1\\
    &\quad + \int_0^{2\pi} (1-\mathds{1}_{[\gamma-d(t, (y, x_2)), \gamma+d(t, (y, x_2))]}(x_1))\mathds{1}_{\Phi(t, L)}(x) \mathds{1}_{[x_1, x_1+\alpha(x)]}(\gamma) \left|\frac{\sqrt{{d(t, x)}^2-{\alpha(x)}^2}}{{d(t, x)}^2}\right|\; \mathrm{d}x_1\\
    &=: I_1(\gamma, x_2) + I_2(\gamma, x_2).
\end{align*}
To bound $I_1(\gamma, x_2)$, we note that the integrand is bounded above by $\frac{C}{d_\text{min}}$, and is only nonzero on a set of measure at most $4d_\text{min}$, so $I_1(\gamma, x_2) \leq C$.

On the other hand, if the integrand of $I_2(\gamma, x_2)$ is nonzero, then $|\alpha(x)| \geq |\gamma-x_1|$. Also, the triangle inequality yields \[ d(t, x) \leq |x_1-y| + d(t, (y, x_2)) \leq (|x_1-\gamma| + |y-\gamma|) + 2d_\text{min} \leq |x_1-\gamma| + 4d_\text{min}. \] Intuitively, the flow direction of $f(x)$ relative to $x$ is almost orthogonal to $f(x)-x$. This will imply that $d(t, x)$ cannot be increasing too quickly. Indeed, combining these inequalities yields \[ \frac{\sqrt{{d(t, x)}^2-{\alpha(x)}^2}}{{d(t, x)}^2} \leq \frac{\sqrt{{(|x_1-\gamma| + 4d_\text{min})}^2-{|x_1-\gamma|}^2}}{{|x_1-\gamma|}^2} \leq \frac{\sqrt{24d_\text{min}}}{{|x_1-\gamma|}^{3/2}}. \]
We use this bound to estimate \[ I_2(\gamma, x_2) \leq 2\int_{d_\text{min}}^{2\pi} \frac{\sqrt{24d_\text{min}}}{{|x_1-\gamma|}^{3/2}} \; \mathrm{d}x_1 \leq 8\sqrt{6}, \] so $I_2(\gamma, x_2)$ is bounded by a constant, as desired.

\section{Exponential mixing via random durations}
In this section, we prove Theorem~\ref{thm:main-example}. Rather than working with the vector fields $b_\text{horiz}$ and $b_\text{vert}$, we will instead work with the flows they generate. Define $f_1^\tau, f_2^\tau \in \Homeo(\R^2 / {(2 \pi \Z)}^2)$ by $f_1^\tau(x, y) := (x + \tau \sin(y), y)$ and $f_2^\tau(x, y) := (x, y + \tau \sin(x))$. Fix a large $T > 0$ and let the random variables $\bar{\tau} := (\tau_1, \tau_2, \dots)$ be a sequence of real numbers chosen independently and uniformly at random in $[0, T]$. We write $\P_n$ to denote the probability measure on $\bar{\tau}^n := (\tau_1, \dots, \tau_{2n})$, and $\P$ to denote the probability measure on $\bar{\tau}$, with the obvious coupling. Define the random map \[ \Phi^{\bar{\tau}}_n := (f_2^{\tau_{2n}} \circ f_1^{\tau_{2n-1}}) \circ (f_2^{\tau_{2n-2}} \circ f_1^{\tau_{2n-3}}) \circ \dots \circ (f_2^{\tau_2} \circ f_1^{\tau_1}). \] Sometimes, if we have only defined $\bar{\tau}^n$ but not all of $\bar{\tau}$, we will refer to the corresponding map as $\Phi^{\bar{\tau}^n}_n$ for emphasis. Note that the fixed points of $\Phi^{\bar{\tau}}_n$ are almost surely $F := {\{0, \pi\}}^2$. We regard $X := (\R^2 / {(2 \pi \Z)}^2) \setminus F$ as a random dynamical system, equipped with the $\Z$-action given by \[ n \cdot^{\bar{\tau}} x := \Phi^{\bar{\tau}}_n(x), \] which we refer to as the one-point chain.

\subsection{A sufficient condition for mixing}
Following the program of Blumenthal\textendash{}Coti~Zelati\textendash{}Gvalani~\cite{BCZG}, we also define the two-point and projective chains.
\begin{defn}
    The two-point chain is the space $X^2 \setminus \Delta$, where $\Delta = \{(x, x) \mid x \in X\}$ is the diagonal, equipped with the $\Z$-action given by \[ n \cdot^{\bar{\tau}} (x, y) \mapsto \tilde{\Phi}^{\bar{\tau}}_n(x, y) := (\Phi^{\bar{\tau}}_n(x), \Phi^{\bar{\tau}}_n(y)). \]
\end{defn}
\begin{defn}
    The projective chain is the unit tangent bundle $T^1 X$, equipped with the $\Z$-action given by \[ n \cdot^{\bar{\tau}} (x, v) \mapsto \hat{\Phi}^{\bar{\tau}}_n(x, v) := \left(\Phi^{\bar{\tau}}_n(x), \frac{\left(D_x\Phi^{\bar{\tau}}_n(x)\right) v}{\left|\left(D_x\Phi^{\bar{\tau}}_n(x)\right)v\right|}\right). \] If $\bar{\tau}$ is understood from context, we drop it from the notation.
\end{defn}
Note that, in the above definitions, the same random sequence $\bar{\tau}$ is used for both coordinates.

Next, we define the notion of geometric ergodicity. To show mixing in Bressan's sense, our main goal is to prove that the two-point chain has the following property.
\begin{defn}
    If $Y$ is a random dynamical system equipped with a $\Z$-action and $V \colon Y \to \R_{\geq 0}$, then we say that $Y$ is $V$-uniformly geometrically ergodic if $Y$ admits a unique stationary measure $\pi$, and there exist $C > 0$ and $\gamma \in (0, 1)$ such that, for any $y \in Y$ and $\varphi \colon Y \to \R$ measurable with $\varphi/(1+V) \in L^\infty(Y)$, \[ \left| \E[\varphi(n \cdot y)] - \int \varphi \mathrm{d}\pi \right| \leq CV(y)\|\varphi/(1+V)\|_{L^\infty} \gamma^n \] for all $n \geq 0$.
\end{defn}

In this paper, every unique stationary measure $\pi$ will be the usual Lebesgue measure (or, in the case of the projective chain, the Lebesgue measure times the uniform measure on $S^1$).

Now we show that uniform geometric ergodicity of the two-point chain implies mixing in Bressan's sense. The argument is inspired by that of Proposition~4.6 in~\cite{BCZG}.
\begin{prop}\label{prop:uge-implies-mixing}
    Suppose that the two-point chain is $V$-uniformly geometrically ergodic for some $V \in L^1(X^2 \setminus \Delta)$. Then for every $\varphi \in L^\infty(\R^2 / {(2\pi\Z)}^2)$ with $\int \varphi = 0$, there is a random variable $C_0 = C_0(V, \varphi, \bar{\tau}) > 0$, which is finite almost surely, such that for every ball $B \subseteq \R^2 / {(2 \pi \Z)}^2$, we have \[ \sup \left\{ n \in \N \mid \frac{1}{|B|}\int_B \varphi(n \cdot^{\bar{\tau}} x) \; \mathrm{d}x > 1 \right\} \leq C_0(\bar{\tau})\log(|B|).\]
\end{prop}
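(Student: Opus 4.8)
The plan is to run the standard "mixing-via-decay-of-correlations" argument, but with care about the weight $V$ that controls the lack of compactness near the fixed point set $F$. Fix $\varphi\in L^\infty$ with $\int\varphi=0$. The first step is to observe that, by duality (change of variables using that each $f_i^\tau$ is measure-preserving), the mixing quantity can be rewritten in terms of the two-point chain: $\left|\frac{1}{|B|}\int_B\varphi(n\cdot x)\,\mathrm dx\right|^2$ equals, up to the normalization, an average of $\varphi(\Phi_n x)\varphi(\Phi_n y)$ over $(x,y)\in B\times B$. More precisely one writes $\left(\int_B\varphi(n\cdot x)\,\mathrm dx\right)^2=\int_{B\times B}\varphi(\Phi_n x)\varphi(\Phi_n y)\,\mathrm d(x,y)$, and the diagonal contributes $O(|B|\|\varphi\|_\infty^2)$, which is negligible compared to the target bound once $|B|$ is small. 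So it suffices to control $\E\big[\Psi(n\cdot(x,y))\big]$ for $\Psi(x,y):=\varphi(x)\varphi(y)$, which is a function on $X^2\setminus\Delta$ with $\int\Psi\,\mathrm d\pi=(\int\varphi)^2=0$ and $|\Psi|\le\|\varphi\|_\infty^2$, hence $\Psi/(1+V)\in L^\infty$.

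The second step applies the hypothesis: $V$-uniform geometric ergodicity of the two-point chain gives, for each fixed $(x,y)$, $\big|\E[\Psi(n\cdot(x,y))]\big|\le C V(x,y)\|\Psi/(1+V)\|_\infty\gamma^n$. Integrating over $B\times B$ and using $V\in L^1(X^2\setminus\Delta)$ (so in particular $\int_{B\times B}V\,\mathrm d(x,y)\le\|V\|_{L^1}$, or better, $\to 0$ as $|B|\to 0$), we obtain
\[
\left|\int_{B\times B}\E[\Psi(n\cdot(x,y))]\,\mathrm d(x,y)\right|\le C\|V\|_{L^1}\|\varphi\|_\infty^2\,\gamma^n.
\]
Combined with step one, this yields $\E\left[\left(\int_B\varphi(n\cdot x)\,\mathrm dx\right)^2\right]\le C\big(\|V\|_{L^1}\gamma^n+|B|\big)\|\varphi\|_\infty^2$.

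The third step is to convert this expectation bound into an almost-sure statement, which is where the random variable $C_0(\bar\tau)$ enters and is the main obstacle. Using the Chebyshev/Borel--Cantelli route: for each $n$, $\P\big(\left|\int_B\varphi(n\cdot x)\,\mathrm dx\right|>|B|\big)\le |B|^{-2}\E[\cdots]\le C(\|V\|_{L^1}\gamma^n|B|^{-2}+|B|^{-1})\|\varphi\|_\infty^2$, which is not summable uniformly in $B$. The fix is a two-parameter argument: discretize the radii as $|B|\asymp\gamma^{n/4}$ (so that for a ball of that size, the statement "$\frac1{|B|}\int_B\varphi(n\cdot x)>1$" fails once $n$ is large), use a net of $O(\gamma^{-n})$ balls of that radius to cover all balls of radius $\ge\gamma^{n/4}$ up to a controlled error, apply the union bound to get a summable-in-$n$ probability, and invoke Borel--Cantelli to find a finite random $N(\bar\tau)$ beyond which no ball of radius $\gtrsim\gamma^{n/4}$ can have average exceeding $1$ at time $n$. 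Unwinding, this says $\sup\{n:\frac1{|B|}\int_B\varphi(n\cdot x)>1\}\le C_0(\bar\tau)|\log|B||=C_0(\bar\tau)\log(|B|)$ (reading $\log|B|$ as negative), with $C_0$ absorbing $N(\bar\tau)$ and the constants $\gamma,C,\|\varphi\|_\infty,\|V\|_{L^1}$. The delicate points to get right are: (i) the covering/continuity estimate — one needs $x\mapsto\varphi(n\cdot x)$ or rather the ball-averages to be stable under perturbing $B$, which follows from the a priori bound $|\varphi|\le\|\varphi\|_\infty$ and comparing $\int_{B}$ to $\int_{B'}$ via $|B\triangle B'|$; and (ii) making sure the exponent in $|B|\asymp\gamma^{n/4}$ is chosen so that both $\gamma^n|B|^{-2}$ and the number of balls times this probability are summable — any choice $|B|=\gamma^{\theta n}$ with $\theta$ small enough works after also letting the net be fine enough. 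Finiteness of $C_0$ almost surely is then immediate from Borel--Cantelli.
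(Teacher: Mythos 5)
Your overall route is the same as the paper's: apply the $V$-uniform geometric ergodicity of the two-point chain to $\Psi(x,y)=\varphi(x)\varphi(y)$ (which has zero mean against $\pi$ and $\Psi/(1+V)\in L^\infty$), integrate over $B\times B$ using $V\in L^1$, run Chebyshev, and then combine a net of balls with a union bound in $n$ and in the scale to get an almost-sure tail bound on $C_0$. The paper does the union bound directly over all $r=1/m$ and all $n\geq k|\log r|$, obtaining $\P[C_0>k]\lesssim (k/2-1)^{-1}$, whereas you couple the scale to the time ($|B|\asymp\gamma^{\theta n}$) and invoke Borel--Cantelli, absorbing the random threshold $N(\bar\tau)$ into $C_0$; these are equivalent in substance, and your explicit symmetric-difference comparison for the net over centers and radii is, if anything, spelled out more carefully than in the paper.

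There is one genuine flaw you need to remove: the ``diagonal contributes $O(|B|\|\varphi\|_\infty^2)$'' term. The identity $\left(\int_B\varphi(n\cdot x)\,\mathrm{d}x\right)^2=\int_{B\times B}\varphi(n\cdot x)\varphi(n\cdot y)\,\mathrm{d}(x,y)$ is exact, and the diagonal is a Lebesgue-null subset of $B\times B$, so it contributes nothing; the pointwise bound $|\E[\varphi(n\cdot x)\varphi(n\cdot y)]|\leq CV(x,y)\|\varphi\|_\infty^2\gamma^n$ holds for a.e.\ $(x,y)$ and integrates because $V\in L^1$, even though $V$ blows up at $\Delta$. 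This is not merely cosmetic: if you keep the extra $+|B|$ in your second-moment bound, Chebyshev produces a $|B|^{-1}$ term in the per-ball probability which does not decay in $n$ at all, and then for \emph{no} choice of $\theta$ in $|B|\asymp\gamma^{\theta n}$ is the union bound over the $O(|B|^{-1})$-many net balls summable in $n$ --- contradicting your own claim of summability in step three. Once that spurious term is deleted, the per-ball probability is $\lesssim \|V\|_{L^1}\|\varphi\|_\infty^2\gamma^n|B|^{-2}$, your choice of scale and net gives a geometrically summable bound, and the rest of the argument (Borel--Cantelli, unwinding to $n\leq C_0(\bar\tau)|\log|B||$) goes through as you describe.
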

\begin{proof}
    Let $x, y \in X$. Then, by uniform geometric ergodicity, we have \[ \left|\E[\varphi(n \cdot x)\varphi(n \cdot y)]\right| \leq CV(x, y)\|\varphi\|_{L^\infty}\gamma^n. \] By Chebyshev's inequality, for any ball $B \subseteq \R^2 / {(2 \pi \Z)}^2$, we integrate the previous display to get \[ \P\left[\frac{1}{|B|}\int_B \varphi(n \cdot^{\bar{\tau}} x) \; \mathrm{d}x > \frac13\right] \leq C\|\varphi\|_{L^\infty}\gamma^n\frac{1}{{|B|}^2}\int_B \int_B V(x, y) \; \mathrm{d}x \; \mathrm{d}y. \] Covering $\R^2 / {(2 \pi \Z)}^2$ by at most $Cr^{-2}$ many balls of radius $\frac{r}{10}$, we conclude from the union bound that \[ \P\left[ \exists B \mid \radius(B) > r \text{ and } \frac{1}{|B|}\int_B \varphi(n \cdot^{\bar{\tau}} x) \; \mathrm{d}x > \frac12\right] \leq Cr^{-2}\|\varphi\|_{L^\infty}\gamma^n\frac{1}{r^2}\int_{X^2 \setminus \Delta} V(x, y) \; \mathrm{d}(x, y). \] Now set $n = k \log r$ for some large $k > 0$. Using the union bound over all $r = 1/m$ for $m \in \N$, we have \[ \P\left[\exists B \mid \exists n \geq k \log |B| \text{ and } \frac{1}{|B|}\int_B \varphi(n \cdot^{\bar{\tau}} x) \; \mathrm{d}x > 1\right] \leq C\sum_{m=1}^\infty m^{-k/2} \leq C{(k/2-1)}^{-1}, \] where we choose $k$ large enough for $r^{k/2}$ to cancel the polynomial terms above, and absorb the rest into the constant $C$.

    It follows that, setting $C_0$ to be the smallest constant which satisfies the claimed inequality, we have \[ \P[C_0 \leq k] \leq C{(k/2-1)}^{-1}, \] so $C_0$ is finite almost surely.
\end{proof}

Thus, to prove Theorem~\ref{thm:main-example}, it suffices to show that the two-point chain is $V$-uniformly geometrically ergodic for some integrable $V$.

\subsection{Uniform geometric ergodicity}
In the following, let $Y$ be one of the chains above (one-point, two-point, or projective) with the associated $\Z$-action.
\begin{defn}
    A set $E \subseteq Y$ is small if there exists a nontrivial measure $\mu$ on $Y$ and a natural number $n = n(E) \in \N$ such that \[ \P[n \cdot^\tau y \in F] \geq \mu(F) \] for every $y \in E$ and every measurable $F \subseteq Y$. If there exists such an $E$, we say that $Y$ admits an open small set.
\end{defn}
We now include a sufficient and easily-verifiable condition for the existence of a small set.
\begin{lem}[Blumenthal\textendash{}Coti~Zelati\textendash{}Gvalani~\cite{BCZG}, Proposition 3.1]\label{lem:small-set-condition}
    Suppose that there is a point $y \in Y$ and some $0 \leq \bar{\tau}^n_\star \leq T$ such that the map \[ \Psi_y \colon \bar{\tau}^n \mapsto \Phi^{\bar{\tau}^n}_n(y) \] is a submersion at $\bar{\tau}^n = \bar{\tau}^n_\star$. Then $Y$ admits a small set.
\end{lem}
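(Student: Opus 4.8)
The plan is to take the small set $E$ to be a sufficiently small open neighborhood of the point $y$ from the hypothesis, with $n(E) := n$, and to exhibit a single minorizing measure $\mu$, supported on a fixed open set $W \subseteq Y$, that works simultaneously for every starting point in $E$. For $z \in Y$ write $\Psi_z \colon \bar\tau^n \mapsto \Phi^{\bar\tau^n}_n(z)$ (interpreting $\Phi^{\bar\tau^n}_n$ as $\tilde\Phi^{\bar\tau^n}_n$ or $\hat\Phi^{\bar\tau^n}_n$ when $Y$ is the two-point or projective chain). The two structural facts I would lean on are: (i) $f_1^\tau$ and $f_2^\tau$, hence $\Phi^{\bar\tau^n}_n$ and the induced maps on the two-point and projective chains, depend jointly smoothly on $(\bar\tau^n, z)$, and in fact $\bar\tau^n$ may be taken in an open neighborhood of $[0,T]^{2n}$ in $\R^{2n}$ since negative durations are harmless; and (ii) having surjective derivative is an open condition, and the derivative depends continuously on $(\bar\tau^n, z)$. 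Combining these, there are an open neighborhood $E \ni y$ in $Y$ and an open box $U \subseteq \R^{2n}$ around $\bar\tau^n_\star$ such that $\Psi_z$ is a submersion at every $\bar\tau^n \in U$ for every $z \in E$; intersecting with $[0,T]^{2n}$ leaves a box of positive Lebesgue measure, which I still call $U$.

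Next I would compute the relevant pushforward of Lebesgue measure. Let $d := \dim Y$ (so $d = 2, 4, 3$ for the one-point, two-point, projective chains; necessarily $2n \ge d$). Since $D\Psi_y(\bar\tau^n_\star)$ has rank $d$, some $d \times d$ minor is invertible; after relabeling I may assume it corresponds to the coordinates $\alpha := (\tau_1, \dots, \tau_d)$, and I write $\beta := (\tau_{d+1}, \dots, \tau_{2n})$ for the rest, $U = A \times B$. Applying the inverse function theorem to $(z, \beta, \alpha) \mapsto \Phi^{(\alpha,\beta)}_n(z)$ and using joint smoothness together with a compactness argument, after shrinking $A$, $B$, and $E$ there exist an open set $W \subseteq Y$ and a constant $c > 0$ so that for every $z \in E$ and $\beta \in B$ the map $g_{z,\beta} \colon \alpha \mapsto \Phi^{(\alpha,\beta)}_n(z)$ restricts to a diffeomorphism from an open subset of $A$ onto a set containing $W$, with Jacobian determinant at most $c^{-1}$ on $g_{z,\beta}^{-1}(W)$. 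By the change of variables formula, for each such $z, \beta$ the pushforward of Lebesgue measure on $A$ under $g_{z,\beta}$ has density at least $c$ on $W$ with respect to Lebesgue measure on $Y$; integrating over $\beta \in B$ via Fubini shows that $(\Psi_z)_\ast(\Leb|_{U})$ has density at least $c\,\Leb(B) > 0$ on $W$, uniformly in $z \in E$.

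Finally I would assemble the minorizing measure. Since $n \cdot^\tau z = \Phi^{\bar\tau^n}_n(z)$ depends only on $\bar\tau^n$, and $\P_n$ is normalized Lebesgue measure on $[0,T]^{2n}$, for every $z \in E$ and measurable $F \subseteq Y$,
\[
    \P[n \cdot^\tau z \in F] \;\geq\; \frac{1}{T^{2n}}\,(\Psi_z)_\ast\bigl(\Leb|_{U}\bigr)(F \cap W) \;\geq\; \frac{c\,\Leb(B)}{T^{2n}}\,\Leb(F \cap W) \;=:\; \mu(F),
\]
where $\mu := \tfrac{c\,\Leb(B)}{T^{2n}}\,\Leb|_W$ is a fixed nontrivial measure on $Y$ not depending on $z \in E$. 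This is exactly the defining property of a small set, with $n(E) = n$, and $E$ is open by construction. I expect the only genuine difficulty to be the middle step: obtaining the \emph{uniform} (over $z \in E$ and $\beta \in B$) density lower bound on a \emph{fixed} open set $W$, which forces one to choose the active coordinates via the rank of the derivative, use joint smoothness of the flow in all variables, and run a compactness argument, rather than merely invoking the submersion theorem at the single point $(\bar\tau^n_\star, y)$.
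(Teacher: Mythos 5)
Your argument is correct and is essentially the same as the paper's (which defers the details to Blumenthal--Coti~Zelati--Gvalani): your coordinate splitting into active variables $\alpha$ plus Fubini over $\beta$ is just a concrete realization of the paper's statement that, by the constant rank theorem, $\Psi_y$ is an orthogonal projection up to charts, with the minorizing measure a small constant times Lebesgue on a neighborhood of $\Psi_y(\bar{\tau}^n_\star)$, the constant controlled by the Jacobians. The uniformity over $z$ in a neighborhood $E$ of $y$, which you rightly flag as the only delicate point, is exactly the detail the paper leaves to the cited reference, and your quantitative inverse-function-theorem/compactness treatment of it is sound.
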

\begin{proof}
    The hypotheses imply that $\bar{\tau}^n_\star$ lies in the support of $\P_n$. The proof is given in~\cite{BCZG}; the idea is that the constant rank theorem implies that $\Psi_y$ looks like (up to a change of charts) an orthogonal projection. The dilation of the pushforward ${(\Psi_y)}^*\P_n$ is controlled by the Jacobians of the charts, so we define $\mu = c\mathds{1}_U\Leb$, where $U$ is a small neighborhood of $\Psi_y(\bar{\tau}^n_\star)$ and $c > 0$ is sufficiently small compared to the $C^1$ norms of the charts.
\end{proof}
\begin{defn}
    A chain $Y$ is strongly aperiodic if there is some $y \in Y$ such that every open neighborhood $E \ni y$, we have $\P[1 \cdot^{\bar{\tau}} y \in E] > 0$.
\end{defn}
\begin{defn}
    A chain $Y$ is topologically irreducible if, for every point $y \in Y$ and open set $U \subseteq Y$, there is some $n = n(y, U) \in \N$ such that \[ \P[n \cdot y \in U] > 0. \]
\end{defn}
\begin{lem}\label{lem:top-irred}
    Suppose that, for every point $y \in Y$ and every open set $U \subseteq Y$, there is some sequence of nonnegative numbers $\tau_1, \dots, \tau_{2n} \geq 0$ such that $n \cdot^{\bar{\tau}} y \in U$. Then $Y$ is topologically irreducible.
\end{lem}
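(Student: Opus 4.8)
The plan is to promote the deterministic steering hypothesis to a positive-probability statement by perturbing the steering durations and using continuity of the flow. The one structural ingredient is the semigroup law
\[ f_1^{\tau}\circ f_1^{\tau'} = f_1^{\tau+\tau'}, \qquad f_2^{\tau}\circ f_2^{\tau'} = f_2^{\tau+\tau'}, \]
immediate from $f_1^\tau(x,y) = (x+\tau\sin y, y)$ and $f_2^\tau(x,y) = (x, y+\tau\sin x)$, together with $f_1^0 = f_2^0 = \mathrm{id}$. Fix $y \in Y$ and an open set $U \subseteq Y$, and let $\tau_1, \dots, \tau_{2n} \geq 0$ be the durations supplied by the hypothesis, so that $\Phi^{\bar\tau}_n(y) \in U$. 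Here $\Phi_n$ stands for whichever of $\Phi_n$, $\tilde\Phi_n$, $\hat\Phi_n$ is attached to $Y$; since the latter two are obtained from the one-point map by taking the Cartesian square off the diagonal, resp.\ the induced action on $T^1 X$, the argument below applies verbatim in all three cases.

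First I would rewrite $\Phi^{\bar\tau}_n$ as $\Phi^{\bar\tau^{n'}_\star}_{n'}$ for some larger $n'$ and some $\bar\tau^{n'}_\star \in {[0,T]}^{2n'}$. It suffices to treat a single block $f_2^{s}\circ f_1^{r}$ with $r, s \geq 0$: splitting off the $f_1$-coordinate gives $f_2^{s}\circ f_1^{r} = \big(f_2^{s}\circ f_1^{r/k}\big)\circ{\big(f_2^{0}\circ f_1^{r/k}\big)}^{\circ(k-1)}$ for any $k \in \N$, by the $f_1$ semigroup law and $f_2^0 = \mathrm{id}$, and then splitting the $f_2$-coordinate of the leading block gives $f_2^{s}\circ f_1^{r/k} = {\big(f_2^{s/m}\circ f_1^{0}\big)}^{\circ(m-1)}\circ\big(f_2^{s/m}\circ f_1^{r/k}\big)$. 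Taking $k > r/T$ and $m > s/T$ exhibits the original block as a composition of $m+k-1$ blocks of the form $f_2^{(\cdot)}\circ f_1^{(\cdot)}$ with every exponent in $[0,T]$. Concatenating this rewriting over the $n$ blocks of $\Phi^{\bar\tau}_n$ produces the desired $n'$ and $\bar\tau^{n'}_\star$, with $\Phi^{\bar\tau^{n'}_\star}_{n'}(y) = \Phi^{\bar\tau}_n(y) \in U$.

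It remains to perturb. The map $\bar\sigma \mapsto \Phi^{\bar\sigma}_{n'}(y)$ on ${[0,T]}^{2n'}$ is continuous — it is a composition of the smooth maps $f_1^\sigma$, $f_2^\sigma$, and the off-diagonal and derivative constructions defining $\tilde\Phi$ and $\hat\Phi$ introduce no singularity on $X^2\setminus\Delta$, resp.\ $T^1X$ — so since $U$ is open there is $\delta > 0$ with $\Phi^{\bar\sigma}_{n'}(y) \in U$ whenever $\bar\sigma \in {[0,T]}^{2n'}$ and $|\bar\sigma - \bar\tau^{n'}_\star| < \delta$. This set of admissible $\bar\sigma$ is the intersection of an open ball with the cube ${[0,T]}^{2n'}$, hence contains a nonempty open box and has positive Lebesgue measure; as $\P_{n'}$ equals ${(1/T)}^{2n'}$ times Lebesgue measure on ${[0,T]}^{2n'}$, the set has positive $\P_{n'}$-measure. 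Therefore $\P[\, n'\cdot^{\bar\tau} y \in U\,] > 0$, which is exactly topological irreducibility with $n = n'$.

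The argument is soft apart from the block-splitting step, which is where the specific alternating $f_2\circ f_1$ structure of the chains — rather than that of an abstract random dynamical system — is used: one must pad with $f_i^0$ so that every factor stays in the prescribed form, which the two-stage split above accomplishes. I do not anticipate a genuine obstacle; the three points needing a line of care are the semigroup law, the continuity of $\Phi_{n'}$ in its durations, and the equivalence of $\P_{n'}$ with Lebesgue measure on the cube.
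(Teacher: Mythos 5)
Your proposal is correct and follows essentially the same route as the paper: use the semigroup property $f_i^{\tau}\circ f_i^{\tau'}=f_i^{\tau+\tau'}$ together with zero-padding to rewrite the given word as one with all durations in $[0,T]$, then conclude by continuity of $\bar\sigma\mapsto\Phi^{\bar\sigma}_{n'}(y)$ and the fact that the rewritten duration vector lies in the support of $\P_{n'}$. The only difference is cosmetic: the paper iterates the single substitution $(\tau_1,\tau_2)\mapsto(\tfrac{\tau_1}{2},0,\tfrac{\tau_1}{2},\tau_2)$, while you split each block into $k$ and $m$ pieces at once and spell out the positive-measure step explicitly.
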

\begin{proof}
    Without loss of generality, we assume that $\tau_i \leq T$ for all $i$. Indeed, the action of $1$ corresponding to the sequence $\tau_1, \tau_2$ is the same as the action of $2$ corresponding to $\frac{\tau_1}{2}, 0, \frac{\tau_1}{2}, \tau_2$, so by making finitely many such substitutions we can ensure that $\tau_i \leq T$ for all $i$. We conclude by noting that the $\Z$-action is continuous and $\tau_1, \dots, \tau_{2n}$ lies in the support of our probability measure $\P_n$.
\end{proof}
\begin{defn}
    A function $V \colon Y \to \R_{\geq 0}$ satisfies the Lyapunov\textendash{}Foster drift condition if there exists some $0 < \alpha < 1$, $b > 0$, and a compact set $C \subseteq Y$ such that \[ \E[V(1 \cdot y)] \leq \alpha V(y) + b\mathds{1}_C \] for every $y \in Y$.
\end{defn}

The above conditions are exactly the hypotheses needed to verify uniform geometric ergodicity. The following is a version of the Perron\textendash{}Frobenius theorem for Markov chains, adapted to the continuous setting.
\begin{thm}[abstract Harris theorem, Theorem~2.3 in~\cite{BCZG}]\label{thm:abstract-harris}
    Let $Y$ be any of the chains above, and assume the following.
    \begin{enumerate}
        \item $Y$ admits an open small set.
        \item $Y$ is topologically irreducible.
        \item $Y$ is strongly aperiodic.
        \item There is a function $V \colon Y \to \R_{\geq 0}$ satisfying the Lyapunov\textendash{}Foster drift condition.
    \end{enumerate}
    Then $Y$ is $V$-uniformly geometrically ergodic.
\end{thm}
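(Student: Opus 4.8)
Since this is Theorem~2.3 of~\cite{BCZG}, which is itself a repackaging of the classical Harris ergodic theorem in the tradition of Meyn\textendash{}Tweedie and of Hairer\textendash{}Mattingly, the plan is to indicate the structure of the argument rather than carry it out in detail. Write $P$ for the transition operator of the one-step $\Z$-action, so that $P\varphi(y) = \E[\varphi(1 \cdot^{\bar{\tau}} y)]$, and set $\|\varphi\|_V := \sup_{y \in Y} |\varphi(y)|/(1 + V(y))$. The conclusion is equivalent to producing $\gamma \in (0,1)$ and $C > 0$ with $\|P^n\varphi - \int \varphi \, \mathrm d\pi\|_V \leq C\gamma^n \|\varphi\|_V$ for all $\varphi$ with $\|\varphi\|_V < \infty$, i.e.\ a spectral gap for $P$ on this weighted space.

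First I would iterate the Lyapunov\textendash{}Foster condition to get $\E[V(n \cdot y)] \leq \alpha^n V(y) + \tfrac{b}{1-\alpha}$ for all $n$, so that for $R$ large the sublevel set $\{V \leq R\}$ is entered with high probability within a controlled number of steps, uniformly in the starting point. Combining this with the compactness of the set $C$ in the drift condition, with topological irreducibility (every point of the compact $C$, hence of $\{V \leq R\}$, can be steered into the given open small set), and with strong aperiodicity (which removes the obstruction to choosing a single time that works for a neighborhood of points at once), one upgrades the abstract open small set to the assertion that $\{V \leq R\}$ is itself small for $R$ large: there are $n_\star \in \N$ and a nontrivial measure $\mu$ with $\P[n_\star \cdot^{\bar{\tau}} y \in F] \geq \mu(F)$ for every measurable $F$ and every $y \in \{V \leq R\}$; write $\eta := \mu(Y) \in (0,1)$ and $\nu := \mu/\eta$.

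The heart of the proof is the Hairer\textendash{}Mattingly coupling estimate. Introduce the distance $d_\beta(y, y') := \mathds{1}_{y \neq y'}\bigl(2 + \beta V(y) + \beta V(y')\bigr)$ on $Y$ for a small parameter $\beta > 0$ to be chosen, with associated Kantorovich\textendash{}Wasserstein distance $\mathcal W_\beta$ on probability measures. Using the drift bound to contract when $\beta(V(y) + V(y'))$ is large, and the minorization $\mu$ on $\{V \leq R\}^2$ to couple the two copies so that they coincide with probability at least $\eta$ when $V(y), V(y') \leq R$, one shows that for a suitable $\beta$ and a suitable iterate $m$ (a multiple of $n_\star$) there is $\bar{\gamma} \in (0,1)$ with $\mathcal W_\beta(P^m \delta_y, P^m \delta_{y'}) \leq \bar{\gamma}\, d_\beta(y, y')$ for all $y, y'$. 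Banach's fixed point theorem applied to $P^m$ on the space of probability measures with finite $V$-integral then yields the unique stationary measure $\pi$ (which by topological irreducibility is the expected one) and geometric convergence in $\mathcal W_\beta$; translating this back into a bound on $\|P^m\varphi - \int\varphi\,\mathrm d\pi\|_V$, and filling in the times that are not multiples of $m$ using $\|P\varphi\|_V \leq C\|\varphi\|_V$ (a consequence of the drift bound), gives the stated estimate with some $\gamma \in (0,1)$ and $C > 0$.

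The main obstacle is the balancing of constants in the coupling step: $\beta$ must be small enough that the drift contribution $\alpha^m \beta(V(y) + V(y'))$ genuinely dominates when $V$ is large, while the minorization level $\eta$ and threshold $R$ (which themselves limit how small $\beta$ can be taken before $2 + \beta V$ ceases to be essentially constant on $\{V \leq R\}^2$) must still leave a contraction factor strictly below $1$ on the small set. Making all of these choices simultaneously compatible in both regimes is the technical core; the remainder — iterating the drift condition, the measurable-selection issues in building the coupling, passing from the $m$-step bound to general $n$, and checking that $\varphi/(1+V) \in L^\infty$ is preserved under $P$ — is routine.
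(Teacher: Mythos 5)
The paper does not actually prove this statement: it is imported verbatim as Theorem~2.3 of~\cite{BCZG}, and the present paper's only contribution is to verify its hypotheses for the specific chains at hand. Your sketch is therefore being compared against the standard proof behind the citation, and in outline it is the right one: iterate the drift bound, obtain a minorization on a suitable ``center'' set, run the Hairer\textendash{}Mattingly weighted-coupling contraction with the distance $d_\beta$, and conclude by a fixed-point argument plus the bound $\|P\varphi\|_V \leq C\|\varphi\|_V$. The constant-balancing you single out as the main obstacle is indeed the familiar technical core of that contraction step.

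The place where your write-up is genuinely thin, however, is the step you dispatch in one sentence: upgrading the existence of \emph{one} open small set to smallness of the compact set $C$ (or of a sublevel set $\{V \leq R\}$). This is exactly where hypotheses (1)\textendash{}(3) do their work, and your attribution of roles is off. Topological irreducibility only gives, for each point $y$, \emph{some} time $n(y)$ at which the chain enters the open small set with positive probability; to make this uniform over a neighborhood of $y$ you need the Feller property of the transition kernel (continuity of $y \mapsto \P[n \cdot^{\bar\tau} y \in U]$ for $U$ open), which you never invoke \textemdash{} it holds for the chains in question because they are random compositions of smooth maps with absolutely continuous parameter laws, but the argument fails without it. Compactness then gives a finite subcover with finitely many times $n_1, \dots, n_k$, and it is strong aperiodicity (together with the Chapman\textendash{}Kolmogorov relation and the small set returning to itself) that lets you synchronize these into a single $n_\star$; it is not, as you write, what lets a single time ``work for a neighborhood of points at once.'' Relatedly, in the abstract statement $V$ is only assumed nonnegative with drift toward a compact $C$, so $\{V \leq R\}$ need not be compact; the minorization should be established on $C$ (or one must use that in the intended applications the sublevel sets of $V$ are compact). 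With that step repaired and the Feller property made explicit, your outline matches the proof underlying the cited theorem.
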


We spend the rest of this section verifying the hypotheses of the Theorem~\ref{thm:abstract-harris}. To verify uniform geometric ergodicity for the two-point chain, we first verify it for the one-point chain and the projective chain, which allows us to apply a result of Blumenthal\textendash{}Coti~Zelati\textendash{}Gvalani~\cite{BCZG}.

\begin{lem}\label{lem:irreducible-and-small}
    The one-point, projective, and two-point chains are all topologically irreducible and strongly aperiodic. Furthermore, each of these chains admits a small set.
\end{lem}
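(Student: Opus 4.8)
The plan is to verify the three properties (topological irreducibility, strong aperiodicity, existence of a small set) for each of the three chains, reducing as much as possible to a single controllability computation.

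First I would establish topological irreducibility by invoking Lemma~\ref{lem:top-irred}, so that it suffices to show that for any starting configuration $y$ and any target open set $U$, there is a finite sequence of nonnegative durations $\tau_1, \dots, \tau_{2n}$ driving $y$ into $U$. For the one-point chain this is a direct hands-on argument: the maps $f_1^\tau$ and $f_2^\tau$ are horizontal and vertical shears, so alternately applying them lets one move any point $x \in X$ to any other point of $X$, with the only care being to avoid the fixed-point set $F = \{0,\pi\}^2$ along the way (one stays on the set where $\sin$ of the relevant coordinate is nonzero). For the projective chain one additionally needs to steer the tangent direction: since $D_x f_1^\tau$ and $D_x f_2^\tau$ are unipotent shears whose off-diagonal entry $\tau\cos(\cdot)$ can be made any real number of either sign by choosing $\tau$ and the base point, the induced projective maps act transitively on $T^1X$ fiberwise, and combined with one-point controllability this gives controllability on $T^1X$. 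For the two-point chain one has to move an ordered pair $(x,y)$ with $x \neq y$ to a target pair off the diagonal; here the key point is that the two components are driven by the \emph{same} durations, so one must exploit that the shear $f_1^\tau$ moves points by different amounts when they have different second coordinates (and similarly for $f_2^\tau$). Generic pairs can be separated in this way and then independently steered; pairs sharing a coordinate need one preliminary shear to break the degeneracy.

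Next, strong aperiodicity: I would exhibit a point $y$ fixed by the time-one map at $\tau = 0$ (e.g.\ taking $\tau_1 = 0$ makes $f_1^0$ the identity, and similarly the relevant partial composition), so that for every neighborhood $E \ni y$ we have $1 \cdot^{\bar\tau} y \in E$ with positive probability simply because small $\tau_1, \tau_2$ keep the image near $y$ and the durations are uniform on $[0,T]$ hence assign positive mass to every neighborhood of $0$. The same $y$ (and its lifts to $T^1X$ and to the diagonal complement, choosing a second point likewise near-fixed) works for all three chains. Finally, for the small sets I would apply Lemma~\ref{lem:small-set-condition}: it is enough to produce, for each chain, a point $y$ and a parameter value $\bar\tau^n_\star$ at which $\Psi_y\colon \bar\tau^n \mapsto \Phi^{\bar\tau^n}_n(y)$ (or its lift) is a submersion. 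For the one-point chain, differentiating the composition $f_2^{\tau_2}\circ f_1^{\tau_1}$ at a well-chosen point in the coordinate directions $\partial_{\tau_1}, \partial_{\tau_2}$ gives two independent vectors spanning $\R^2/(2\pi\Z)^2$ for a suitable choice, so $n=1$ suffices; for the projective and two-point chains one takes $n$ large enough that there are enough free durations ($2n \geq 3$ or $2n \geq 4$) and checks, by an explicit Jacobian computation at a generic point, that the differential of $\Psi_y$ is onto $T^1X$ (dimension $3$) respectively onto $X^2\setminus\Delta$ (dimension $4$). This last surjectivity check is the main obstacle: it requires writing out the linearization of a composition of several shears and verifying that a particular $3\times(2n)$ or $4\times(2n)$ Jacobian has full rank, which is where the argument is least automatic and where one must choose the base point and $\bar\tau^n_\star$ carefully (and stay away from $F$ throughout).

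I would organize the write-up so that the one-point computations are done first and in full detail, then note that the projective and two-point statements follow by the same style of computation with the extra coordinates appended, pointing out only the new phenomena (steering the direction; using distinct coordinates to separate the two points). Strong aperiodicity and the submersion-at-$\tau=0$-type observations are short enough to state inline.
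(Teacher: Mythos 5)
Your overall architecture matches the paper's: strong aperiodicity from the positive probability of arbitrarily small durations, topological irreducibility via Lemma~\ref{lem:top-irred} and an explicit controllability argument, and small sets via Lemma~\ref{lem:small-set-condition} with a rank computation for an explicit $\Psi_y$ at an explicit $\bar\tau^n_\star$ (the paper takes $n=1$ for the one-point chain and $n=2$ for the projective and two-point chains). However, there is a genuine gap at the step you wave through for the two-point chain. You correctly identify the difficulty --- both components are driven by the \emph{same} durations --- but then assert that ``generic pairs can be separated in this way and then independently steered,'' which is precisely the nontrivial claim and is never independent steering: every control move acts on both points, and a single application of $f_1^{\tau}$ moves the pair of first coordinates only along the one-parameter set $\{(x_1+\tau\sin(x_2),\,y_1+\tau\sin(y_2))\}$, whose closure in the two-torus is a one-dimensional subtorus whenever $\sin(x_2)$ and $\sin(y_2)$ are rationally dependent. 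The paper's resolution is exactly this point: it first steers the pair into the set where $\sin(x_2)$ and $\sin(y_2)$ are $\Q$-linearly independent (a generic, hence reachable, condition after a small perturbing shear), and then uses density of orbits of the irrational translation $\rot_{\alpha,\beta}$ on the two-torus to place \emph{both} first coordinates near arbitrary targets simultaneously with a single $\tau_1$, repeating with the roles of the coordinates exchanged. Without this density mechanism (or a substitute, e.g.\ an interleaving argument producing two linearly independent displacement directions whose nonnegative cone covers the torus), your sketch does not establish irreducibility of the two-point chain, which is the heart of the lemma.

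A secondary inaccuracy: for the projective chain, the claim that the induced projective maps ``act transitively on $T^1X$ fiberwise'' is false for a single shear --- $D_xf_1^{\tau}$ is unipotent and its projectivization fixes the direction $(\pm 1,0)$ --- so steering the tangent direction requires alternating horizontal and vertical shears and special treatment of the fixed directions; the paper does this by routing through intermediate sets ($E_2$ excludes $(\pm1,0)$, and the case $v=(\pm1,0)$ gets its own choice of durations). This is fixable but needs to be said. Finally, note that for the small sets you only promise the Jacobian computations; the paper's proof consists of exhibiting the specific points and duration vectors and checking the ranks ($2$, $3$, and $4$ respectively), and the lemma is not proved until some such explicit witness is produced.
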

\begin{proof}
    Strong aperiodicity follows immediately from the fact that $\P[\tau_1 < \varepsilon] > 0$ for any $\varepsilon > 0$.
    
    Since the one-point chain is a special case of either the projective chain or the two-point chain, it suffices to prove topological irreducibility for those. We use Lemma~\ref{lem:top-irred} and consider separately
    \begin{enumerate}
        \item the projective chain:
            We say that a set $E \subseteq T^1 X$ is reachable from a point $(x, v)$, if there exist some $\tau_1, \tau_2, \dots, \tau_{2n} \geq 0$ such that $\hat{\Phi}_n^{\bar{\tau}}(x, v) \in E$. If $E, F \subseteq T^1 X$ are both sets, then we say that $E$ is reachable from $F$ if $E$ is reachable from every point in $F$.

            Define the sets $E_1 := (\{\pi/2\} \oplus \R) \oplus S^1$, $E_2 := (\{3\pi/2\} \oplus \R) \oplus (S^1 \setminus (\pm 1, 0))$, and $E_3 := \{((\pi/2, \pi/2), (0, 1))\}$. We want to show that $E_3$ is reachable from $T^1 X$, so we show the following three claims.
            \begin{enumerate}
                \item $E_1$ is reachable from $T^1 X$.

                    Fix $((x, y), v) \in T^1 X$. Since either $\sin(x) \neq 0$ or $\sin(y) \neq 0$, we see that, by taking $\tau_1 := 0$ and almost any $\tau_2 > 0$, we may assume without loss of generality that $y \neq 0$. By symmetry, assume that $\sin(y) > 0$. Then set $\tau_1 := \frac{5\pi/2 - x}{\sin(y)}$ and  $\tau_2 := 0$ to see the claim.

                \item $E_2$ is reachable from $E_1$.

                    Let $((x, y), v) \in E_1$. If $v = (\pm 1, 0)$, then set $\tau_1 := 0$, $\tau_2 := 9\pi/4 - y$, $\tau_3 := \pi \sqrt{2}$, $\tau_4 := 0$ to conclude. On the other hand, if $v_2 \neq 0$, then set $\tau_1 := 0$, $\tau_2 := 5\pi/2 - y$, $\tau_3 := \pi$, $\tau_4 := 0$ to conclude

                \item $E_3$ is reachable from $E_2$.

                    Let $((x, y), (v_1, v_2)) \in E_2$. Choose $\tau_3 > 0$ and $z \in [0, \pi]$ such that $\tau_3 \sin(z) \in 2\pi \Z + \pi$ and $\tau_3 \cos(z) = v_1/v_2$. Then set $\tau_1 := 0$, $\tau_2 := y - z + \pi$, and $\tau_4 = \pi/2 - z + 2\pi$ to conclude.
            \end{enumerate}
            We conclude by noting that any point $(y, w)$ is reachable from a point $(x, v)$ by first traveling to $E_3$ and then traveling to $(y, w)$ (using the same arguments as above, but in reverse).
        \item the two-point chain:
            Fix $(w, z) \in X^2 \setminus \Delta$ and $\varepsilon > 0$. As with the projective chain, we define the sets \[ E_1 := \{(x, y) \in X^2 \setminus \Delta \mid \sin(x_2) \text{ and } \sin(y_2) \text{ are $\Q$-linearly independent}\}, \] \[ E_2 := (B_{\varepsilon/3}(w_1) \oplus \R) \oplus (B_{\varepsilon/3}(z_1) \oplus \R), \] and \[ E_3 := \{(x, y) \in X^2 \setminus \Delta \mid \dist((x, y), E_2) \leq \varepsilon/3 \text{ and } \sin(x_1) \text{ and } \sin(y_1) \text{ are $\Q$-linearly independent}\}. \] We want to show that $B_{\varepsilon}(w, z)$ is reachable from $X^2 \setminus \Delta$, so we show the following 4 claims.
            \begin{enumerate}
                \item $E_1$ is reachable from $X^2 \setminus \Delta$.

                    Let $(x, y) \in X^2 \setminus \Delta$ and let $\delta > 0$. If $\sin(x_2) \neq \sin(y_2)$, then note that by varying $0 \leq \tau_1, \tau_2 \leq \delta$, the set of possible values of $(w_2, z_2)$ where $(w, z) = \tilde{\Phi}^{\bar{\tau}}_1(x, y)$ has nonempty interior, and hence (since $\Q$-linearly independent points are generic) $E_1$ is reachable from $(x, y)$. On the other hand, assume that $\sin(x_2) = \sin(y_2)$. If $\sin(x_1) \neq 0$ or $\sin(y_1) \neq 0$, then using $\tau_1 = 0$ and $0 \leq \tau_2 \leq \delta$ shows that a point $(x', y')$ with $\sin(x'_2) \neq \sin(y'_2)$ is reachable so we conclude. Otherwise, assume that $\sin(x_1) = \sin(y_1) = 0$. Then $\sin(x_2) \neq 0$, so the same argument with $0 \leq \tau_1 \leq \delta$ and $\tau_2 = 0$ shows that a point $(x', y')$ with $\sin(x'_1) \neq 0$, so we conclude.

                \item $E_2$ is reachable from $E_1$.

                    Since orbits of the map $\rot_{\alpha, \beta} \colon {(\R/\Z)}^2 \to {(\R/\Z)}^2$  given by $\rot_{\alpha, \beta}(x, y) := (x+\alpha, y+\beta)$ are dense when $\alpha, \beta$ are $\Q$-linearly independent, there is some satisfactory $\tau_1 > 0$ and $\tau_2 = 0$.
                \item $E_3$ is reachable from $E_2$.

                    We use the same procedure as in the first claim, noting that our construction moves the point a distance of at most $4\delta$. Choosing $\delta \leq \frac{\varepsilon}{12}$ allows us to conclude.

                \item $B_{\varepsilon}(w, z)$ is reachable from $E_3$.

                    We conclude by the same procedure as the second claim.
            \end{enumerate}
    \end{enumerate}

    Next, we use Lemma~\ref{lem:small-set-condition} check that each of the chains admits a small set.
    \begin{enumerate}
        \item the one-point chain:

            Set $y = (\frac{\pi}{2}, \frac{\pi}{2})$ and $n = 1$ with $\bar{\tau}^1 = (\pi, \pi)$. Then we compute that \[ D_{\bar{\tau}^1} \Phi^{\bar{\tau}^1}_1(y) = \begin{pmatrix} 1 & 0\\ 0 & -1\end{pmatrix}, \] which has rank $2$ and hence satisfies the hypotheses of Lemma~\ref{lem:small-set-condition}.
        \item the projective chain:
            
            Set $y = ((\frac{\pi}{2}, \frac{\pi}{2}), (0, 1))$ and $n = 2$ with $\bar{\tau}^2 = (\pi, \pi, \pi, \pi)$. Then we compute that \[ D_{\bar{\tau}^2} \hat{\Phi}^{\bar{\tau}^2}_2(y) = \begin{pmatrix} 1 & 0 & -1 & 0\\ 0 & -1 & 0 & 1\\ 0 & -\pi & 0 & 0\\ 0 & 0 & 0 & 0 \end{pmatrix}, \] which has rank $3$ as desired.
        \item the two-point chain:

            Set $y = ((0, \frac{\pi}{2}), (\frac{\pi}{2}, 0))$ and $n = 2$ with $\bar{\tau}^2 = (\pi, \pi, \pi, \pi)$. Then we compute that \[ D_{\bar{\tau}^2} \tilde{\Phi}^{\bar{\tau}^2}_2(y) = \begin{pmatrix} 1 & 0 & 1 & 0\\ 0 & 0 & \pi & 0\\ 0 & -\pi & 0 & 0\\ 0 & 1 & 0 & 1 \end{pmatrix}, \] which has rank $4$ as desired.
    \end{enumerate}
\end{proof}
\begin{lem}\label{lem:potential-exists}
    There exists an integrable function $V \colon X \to \R$ which satisfies the Lyapunov\textendash{}Foster drift condition.
\end{lem}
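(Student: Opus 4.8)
The plan is to take $V(x) := \dist(x,F)^{-s}$ for a sufficiently small exponent $s \in (0,1)$. Since near each point of $F$ this behaves like $|z|^{-s}$, it is a strictly positive, real-valued, and integrable function on $X$ whenever $s < 2$, so the only work is the drift condition. Everything is compact away from $F$, so the whole content is local near the four fixed points, and the strategy is to linearize $\Phi^{\bar{\tau}}_1 = f_2^{\tau_2}\circ f_1^{\tau_1}$ at each $p \in F$ and reduce the drift condition to a uniform Furstenberg-type lower bound for the resulting random matrix products.

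First I would record the linearizations. Each $p \in F$ is fixed by $\Phi^{\bar{\tau}}_1$, and a direct computation gives $D_x\Phi^{\bar{\tau}}_1(p) = A_p(\tau_1,\tau_2)$, where $A_p = \left(\begin{smallmatrix}1&0\\\pm\tau_2&1\end{smallmatrix}\right)\left(\begin{smallmatrix}1&\pm\tau_1\\0&1\end{smallmatrix}\right)$ for a choice of signs depending on $p$. In particular $\det A_p = 1$, so $\|A_p^{-1}\| = \|A_p\|$, and $\|A_p(\tau_1,\tau_2)\| \leq C(1+\tau_1)(1+\tau_2)$. In a chart $y = p + z$ valid for $|z| < \delta_0$ (with $\delta_0$ small, depending on $T$) one has $\Phi^{\bar{\tau}}_1(y) = p + A_p z + R(z)$ with $|R(z)| \leq M(T)|z|^2$; since $|A_p z| \geq |z|/\|A_p\|$, the remainder is of lower order, and $\dist(\Phi^{\bar{\tau}}_1(y),F) = |A_p z|\,(1 + O(\delta_0))$ uniformly in $\tau$ (the implied constant depending on $T$, which is fixed).

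The key estimate — and the step I expect to be the main obstacle — is to show that there is an absolute constant $C_0$ with
\[ \lambda(T) \ :=\ \inf_{p\in F}\ \inf_{v\in S^1}\ \E_{\tau_1,\tau_2}\bigl[\log|A_p(\tau_1,\tau_2)v|\bigr]\ \geq\ \log T - C_0, \]
so that $\lambda(T) > 0$ once $T$ is large. Since $|A_pv|$ is insensitive to the signs in $A_p$, it suffices to treat $A_{(0,0)}$; writing $w_1 := v_1 + \tau_1 v_2$ and $w_2 := v_2$ we have $A_{(0,0)}v = (w_1,\ \tau_2 w_1 + w_2)$, so for $w_1 \neq 0$
\[ \log|A_{(0,0)}v| \ =\ \log|w_1| + \tfrac12\log\!\bigl(1 + (\tau_2 + w_2/w_1)^2\bigr)\ \geq\ \log|w_1| + \log|\tau_2 + w_2/w_1|. \]
Averaging in $\tau_2$ and invoking the elementary inequality $\frac1T\int_0^T\log|t+c|\,\mathrm{d}t \geq \log(T/2) - 1$ (valid for every real $c$, with equality at $c = -T/2$) reduces matters to bounding $\E_{\tau_1}[\log|v_1 + \tau_1 v_2|]$ below by an absolute constant, which I would get from the same inequality together with a short case split according to whether $|v_2| \geq \tfrac12$ (so $w_1$ already sweeps an interval of length $\geq T/2$), or $|v_2| < \tfrac12$ with $T|v_2| \leq |v_1|/2$ (so $|w_1| \geq |v_1|/2 \geq \sqrt{3}/4$ throughout), or $|v_2| < \tfrac12$ with $T|v_2| > |v_1|/2$ (so $w_1$ sweeps an interval of length $> \tfrac14$). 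The delicate point is that when $v$ is nearly parallel to $(\pm1,0)$ the shear $f_1^{\tau_1}$ produces no expansion at all, so the growth must be harvested entirely from $f_2^{\tau_2}$, and symmetrically near $(0,\pm1)$; it is precisely the explicitness of the maps and the freedom to take $T$ large that let a hands-on argument replace an appeal to general random-matrix theory.

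Finally I would convert the logarithmic estimate into the drift condition. Applying $e^{-y} \leq 1 - y + \tfrac{y^2}{2}e^{|y|}$ with $y = s\log|A_pv|$, and using $e^{|y|} = \max(|A_pv|^s,|A_pv|^{-s}) \le \|A_p\|$, one gets $\E[|A_pv|^{-s}] \leq 1 - s\,\E[\log|A_pv|] + \tfrac{s^2}{2}\E\bigl[(\log|A_pv|)^2\,\|A_p\|\bigr]$, and the last expectation is bounded uniformly in $p$ and $v$ since $\bigl|\log|A_pv|\bigr| \le \log\|A_p\| \le \log\bigl(C(1+\tau_1)(1+\tau_2)\bigr)$, which has all moments against $\|A_p\|$; shrinking $s$ thus gives $\sup_{p,v}\E[|A_pv|^{-s}] \leq 1 - \tfrac{s}{2}\lambda(T) < 1$. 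Combined with the linearization, for $\dist(y,F) < \delta_0$ with $\delta_0$ small this yields $\E_\tau[V(\Phi^{\bar{\tau}}_1(y))] = (1+O(\delta_0))\,|z|^{-s}\,\E_\tau[|A_p(z/|z|)|^{-s}] \leq \alpha V(y)$ for some $\alpha = \alpha(T,s) < 1$. On the compact set $C := \{\dist(\cdot,F) \geq \delta_0\}$, the function $V$ is bounded, and since $\Phi^{\bar{\tau}}_1$ is a homeomorphism with $\Phi^{\bar{\tau}}_1(F) = F$, compactness of $C \times [0,T]^2$ forces $\dist(\Phi^{\bar{\tau}}_1(y),F) \geq \rho > 0$ there, so $\E_\tau[V(\Phi^{\bar{\tau}}_1(y))] \leq \rho^{-s} =: b$. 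Hence $\E[V(1\cdot y)] \leq \alpha V(y) + b\,\mathds{1}_C(y)$ for every $y \in X$, which is the Lyapunov\textendash{}Foster drift condition.
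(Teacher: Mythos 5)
Your argument is correct, but it takes a genuinely different route from the paper. You linearize $\Phi^{\bar{\tau}}_1$ at each fixed point and prove a quantitative Furstenberg-type bound $\inf_{p\in F}\inf_{v\in S^1}\E[\log|A_p(\tau_1,\tau_2)v|]\geq \log T - C_0$ for the random product of the two parabolic shear matrices, convert it into a negative-moment contraction $\sup_{p,v}\E[|A_pv|^{-s}]<1$ for small $s>0$ via $e^{-y}\leq 1-y+\tfrac{y^2}{2}e^{|y|}$, and then transfer the contraction to the nonlinear map by a uniform Taylor remainder estimate on a $\delta_0$-neighborhood of $F$, taking the drift set to be the complement of that neighborhood; all of these steps check out (your elementary inequality $\tfrac1T\int_0^T\log|t+c|\,\mathrm{d}t\geq\log(T/2)-1$ for every real $c$ is what makes the direction-uniform bound work, and it also covers all four sign patterns of $A_p$, so the loose claim that $|A_pv|$ is ``insensitive to signs'' is harmless). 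The paper never linearizes: it takes $V$ comparable to yours (a coordinatewise $\max$ to the power $-\alpha$ near each fixed point) and runs a direct case analysis on the one-step nonlinear map, isolating low-probability events on which $|x-\tau_1\sin(y)|$ is anomalously small and paying a controlled factor of $V$ on each, which yields completely explicit constants ($\alpha=1/20$, $T\geq 5\cdot10^5$, contraction $1-10^{-4}$) at the price of a more ad hoc computation. Your route isolates the underlying mechanism (a log-norm growth of order $\log T$ for the linearized random shear products, i.e.\ a hands-on Furstenberg positivity at the fixed points) and makes the $T$-dependence transparent, but leaves $s$, $\delta_0$, and the contraction factor implicit; just note that $\delta_0$ must be shrunk after $s$ is fixed so that the factor $(1+O(\delta_0))(1-\tfrac{s}{2}\lambda(T))$ stays below $1$, which your final step implicitly does and which causes no circularity.
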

\begin{proof}
    For simplicity, we construct $V$ which satisfies the drift condition only locally at $(0, 0)$. Taking the maximum of four translated copies of such a $V$ together, one for each fixed point in $F$, will then satisfy the drift condition globally on $X$.

    We let $V(x, y) = {\max(x, y)}^{-\alpha}$ and $C = {[-r_0, r_0]}^2 \cap X$, where $\alpha, r_0 > 0$ will be chosen to be small (to start, we assume that $r_0 \leq \pi/2$ so $|\sin(t)| \geq |\frac{t}{2}|$ for all $t \in [-r_0, r_0]$). We want to show that, if $x, y \in {[-r_0, r_0]}^2$, then $\E[V(\Phi^{\bar{\tau}}_1(x, y))] < cV(x, y)$ for some $0 < c < 1$. Without loss of generality, assume that $y > 0$.

    If $x < 0$, then, as long as $2(T+1)r_0 < \pi$, we have $V(\Phi^{\bar{\tau}}_1(x, y)) \leq V(x, y)$. Besides, if $\tau_1, \tau_2 \geq 2$, we have $V(\Phi^{\bar{\tau}}_1(x, y)) \leq 2^{-\alpha}V(x, y)$. The event that $\tau_1, \tau_2 \geq 2$ has probability ${(T-2)}^2/T^2$, so we conclude that
    \begin{equation}\label{eq:bound1}
        \E[\Phi^{\bar{\tau}}_1(x, y)] \leq \left[(1-{(T-2)}^2/T^2) + 2^{-\alpha}\right]V(x, y)
    \end{equation}
    in the case where $x < 0$.

    On the other hand, assume that $x \geq 0$. There are two cases:
    \begin{enumerate}
        \item If $y \geq x$, then we split into three sub-cases:
            \begin{enumerate}
                \item Let $E$ be the event that $|x-\tau_1\sin(y)| \leq r_1 := \frac{y}{2T}$. Since $\sin(y) \geq \frac{y}{2}$, we see that $\P[E] \leq \frac{2}{T^2}$, and $V(\Phi^{\bar{\tau}}_1(x, y)) \leq 2^\alpha V(x, y)$ in the event $E$.
                \item On the other hand, let $F$ be the event that $r_1 < |x-\tau_1\sin(y)| \leq 2y$. Then $\P[F] \leq \frac{4}{T}$ and $V(\Phi^{\bar{\tau}}_1(x, y)) \leq r_1^{-\alpha} \leq {(2T)}^\alpha V(x, y)$ in the event $F$.
                \item Finally, in the complement of $E \cup F$, we see that $V(\Phi^{\bar{\tau}}_1(x, y)) \leq 2^{-\alpha}V(x, y)$.
            \end{enumerate}
            To conclude, we note that
            \begin{align}
                \E[V(\Phi^{\bar{\tau}}_1(x, y))] &= \E[V(\Phi^{\bar{\tau}}_1(x, y)) | E]\P[E] + \E[V(\Phi^{\bar{\tau}}_1(x, y)) | F]\P[F] + \E[V(\Phi^{\bar{\tau}}_1(x, y)) | {(E \cup F)}^c](1-\P[E \cup F]) \nonumber \\
                &\leq \left(\frac{2^{1+\alpha}}{T^2} + \frac{2^{2+\alpha}}{T^{1-\alpha}} + 2^{-\alpha}\right)V(x, y). \label{eq:bound2}
            \end{align}
        \item If $y < x$, then we split into four sub-cases:
            \begin{enumerate}
                \item As before, let $E$ be the event that $|x-\tau_1\sin(y)| \leq r_1 := \frac{y}{2T}$. Then $\P[E] \leq \frac{2r_1}{\frac12 y T} = \frac{2y}{T^2}$, and $V(\Phi^{\bar{\tau}}_1(x, y)) \leq {\left(\frac{y}{2}\right)}^{-\alpha}$. In order for $\P[E]$ to be positive, we need $x-yT \leq r_1$, and hence $y \geq \frac{x}{2T}$. Therefore, in the event $E$, we have $V(\Phi^{\bar{\tau}}_1(x, y)) \leq {(4T)}^\alpha V(x, y)$.
                \item Unlike before, we define $F$ to be the event that $r_1 < |x-\tau_1\sin(y)| \leq r_2 := \frac{x}{\sqrt{T}}$. Then $\P[F] \leq \frac{2r_2}{\frac12 y T} = \frac{4x}{yT^{3/2}}$. In order for $\P[F]$ to be positive, we need $x-yT \leq r_2$, and hence $x \leq y T$, so it follows that $\P[F] \leq \frac{4}{\sqrt{T}}$. In the event $F$, we have $V(\Phi^{\bar{\tau}}_1(x, y)) \leq r_1^{-\alpha} \leq {(2T^2)}^\alpha V(x, y)$.
                \item Finally, we define $G$ to be the event that $|x-\tau_1\sin(y)| > r_2$ and $|\tau_2\sin(r_2)| \leq 3x$. Then $\P[G] \leq \frac{3x}{\frac12 r_2 T} = \frac{6}{\sqrt{T}}$, and $V(\Phi^{\bar{\tau}}_1(x, y)) \leq r_2^{-\alpha} = T^{\alpha/2}V(x, y)$.
                \item In the complement of $E \cup F \cup G$, we have $V(\Phi^{\bar{\tau}}_1(x, y)) \leq 2^{-\alpha}V(x, y)$.
            \end{enumerate}
            To conclude, we note that
            \begin{align}
                \E[V(\Phi^{\bar{\tau}}_1(x, y))] &= \E[V(\Phi^{\bar{\tau}}_1(x, y)) | E]\P[E] + \E[V(\Phi^{\bar{\tau}}_1(x, y)) | F]\P[F] \nonumber \\
                &\quad + \E[V(\Phi^{\bar{\tau}}_1(x, y)) | G]\P[G] + \E[V(\Phi^{\bar{\tau}}_1(x, y)) | {(E \cup F \cup G)}^c](1-\P[E \cup F \cup G]) \nonumber \\
                &\leq \left(\frac{2^{1+2\alpha}}{T^{2-\alpha}} + \frac{2^{2+\alpha}}{T^{1/2 - 2\alpha}} + \frac{6}{T^{(1-\alpha)/2}} + 2^{-\alpha}\right)V(x, y). \label{eq:bound3}
            \end{align}
    \end{enumerate}

    Evaluating inequalities~\ref{eq:bound1},~\ref{eq:bound2}, and~\ref{eq:bound3} shows that, if we choose $\alpha = \frac{1}{20}$ and $T \geq 5 \cdot 10^{5}$, then $V$ satisfies the mixing condition with $c = 1-10^{-4}$.
\end{proof}

Together, Lemmas~\ref{lem:irreducible-and-small} and~\ref{lem:potential-exists} show that the assumptions of the Theorem~\ref{thm:abstract-harris} are satisfied, so the one-point chain is $V$-uniformly geometrically ergodic with the Lebesgue measure as the unique stationary measure. We note that the same $V$ (taken to be constant in the second coordinate of the projective chain) satisfies the drift condition for the projective chain, so the projective chain is also $V$-uniformly geometrically ergodic.

It remains to construct a function $W \colon X^2 \setminus \Delta \to \R_{> 0}$ for the two-point chain satisfying a Lyapunov\textendash{}Foster drift condition. A compact subset of $X^2 \setminus \Delta$ must avoid both the diagonal, $\Delta$, and fixed points in both coordinates, $F \times X \cup X \times F$. First, we appeal to a result of Blumenthal\textendash{}Coti~Zelati\textendash{}Gvalani~\cite{BCZG}, which handles the points near the diagonal. In the context of~\cite{BCZG}, the one-point chain is compact, so this function, $W$, satisfied the drift condition for the two-point chain. In our case, we will need to modify $W$ to deal with the lack of compactness of $X$ near fixed points.

\begin{defn}
    The top Lyapunov exponent is defined by \[ \lambda_1 := \lim_{n \to \infty} \frac{1}{n} \log |D_x \Phi^{\bar{\tau}}_n|. \]
\end{defn}

Given that $X$ has a stationary ergodic measure and the map $\Phi_1^{\bar{\tau}}$ is almost surely bounded in $C^1$ by a deterministic constant, it is well known (see, e.g. Kifer~\cite{Kifer}) that the limit which defines $\lambda_1$ exists and is almost surely a deterministic constant. The next result of Blumenthal\textendash{}Coti~Zelati\textendash{}Gvalani~\cite{BCZG} shows that verifying positivity of the top Lyapunov exponent is enough to handle points near the diagonal.

\begin{lem}[Proposition~4.5, Blumenthal\textendash{}Coti~Zelati\textendash{}Gvalani~\cite{BCZG}]\label{lem:almost-drift-condition}
    Suppose that $\lambda_1 > 0$ and that the one-point and projective chains are uniformly geometrically ergodic. Then there are $p,s > 0$, $0 < \gamma < 1$, and $\psi \colon X^2 \setminus \Delta \to \R_{\geq 0}$ continuous, bounded by constants $0 < c \leq \psi \leq C < \infty$, such that $W(x, y) := |x-y|^{-p}\psi(x, y)$ satisfies the inequality \[ \E[W(1 \cdot (x, y))] \leq \gamma W(x, y) \] for all $(x, y) \in X^2 \setminus \Delta$ with $\dist((x, y), \Delta) < s$.
\end{lem}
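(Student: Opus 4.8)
The plan is to reduce the drift estimate for the two-point chain near the diagonal to a spectral statement about the projective chain, which we are permitted to assume is uniformly geometrically ergodic. The heuristic is that if $(x,y)$ lies within a small distance $s$ of $\Delta$, then $v := (y-x)/|y-x|$ is well-defined and $\tilde{\Phi}^{\bar\tau}_1(x,y)$ is, to first order, governed by the linearized dynamics $\hat{\Phi}^{\bar\tau}_1(x,v)$: since $\Phi^{\bar\tau}_1$ is almost surely bounded in $C^2$ by a deterministic constant, $\Phi^{\bar\tau}_1(y)-\Phi^{\bar\tau}_1(x) = (D_x\Phi^{\bar\tau}_1(x))(y-x) + O(|y-x|^2)$ with uniform error, so $|\Phi^{\bar\tau}_1(y)-\Phi^{\bar\tau}_1(x)|^{-p} = |(D_x\Phi^{\bar\tau}_1(x))v|^{-p}\,|y-x|^{-p}\,(1+O(|y-x|))$. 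Thus the evolution of $|x-y|^{-p}$ agrees, up to a multiplicative $(1+O(s))$ factor, with the evolution of $|(D_x\Phi^{\bar\tau}_1(x))v|^{-p}$ along the projective chain, and the right choice of $\psi$ will be a bounded, off-diagonal-smoothed extension of the principal eigenfunction of a twisted transfer operator of the projective chain.

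Next I would study that operator. For $q \in \R$, define $\mathcal{L}_q\varphi(x,v) := \E\bigl[\,|(D_x\Phi^{\bar\tau}_1(x))v|^{q}\,\varphi(\hat{\Phi}^{\bar\tau}_1(x,v))\,\bigr]$ acting on functions on $T^1X$; then $\mathcal{L}_0$ is precisely the Markov operator of the projective chain, which by hypothesis has a spectral gap on the $V$-weighted space, with leading eigenvalue $1$ and eigenfunction $\equiv 1$. Because $q\mapsto |(D_x\Phi^{\bar\tau}_1(x))v|^q$ is analytic with derivatives bounded uniformly in $(x,v,\bar\tau)$ — using the a.s.\ deterministic bounds on $\|D_x\Phi^{\bar\tau}_1\|$ and on $\|(D_x\Phi^{\bar\tau}_1)^{-1}\|$ away from $F$ — the family $q\mapsto\mathcal{L}_q$ is an analytic perturbation of $\mathcal{L}_0$, so Kato perturbation theory gives, for $|q|$ small, a simple leading eigenvalue $e^{\Lambda(q)}$ with $\Lambda$ analytic, $\Lambda(0)=0$, and a continuous positive eigenfunction $\psi_q$ bounded above and below by positive constants. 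Differentiating $\Lambda$ at $0$ and using ergodicity of the projective chain identifies $\Lambda'(0)$ with the top Lyapunov exponent $\lambda_1$ via the Furstenberg integral; since $\lambda_1 > 0$ by assumption, choosing $q = -p$ with $p>0$ small yields $\gamma := e^{\Lambda(-p)} \in (0,1)$ and $\mathcal{L}_{-p}\psi_{p} = \gamma\,\psi_{p}$.

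Finally I would transfer this back. Set $\psi(x,y) := \psi_{p}\bigl(x,(y-x)/|y-x|\bigr)$ for $(x,y)$ near $\Delta$, extend it to all of $X^2\setminus\Delta$ as a continuous function pinched between two positive constants, and put $W(x,y) := |x-y|^{-p}\psi(x,y)$. For $\dist((x,y),\Delta)<s$, substitute the first-order expansion above into $\E[W(1\cdot(x,y))]$, use continuity of $\psi_p$ to absorb the $O(|y-x|)$ errors, and apply $\mathcal{L}_{-p}\psi_p=\gamma\psi_p$ to get $\E[W(1\cdot(x,y))] \le \gamma\,(1+O(s))\,W(x,y)$; shrinking $s$ so that $\gamma(1+O(s))<1$ and renaming this quantity $\gamma$ completes the argument.

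I expect the main obstacle to be the perturbation step in the non-compact setting: one must realize $\mathcal{L}_q$ as a bounded analytic family on exactly the weighted space in which the projective chain was shown to be $V$-uniformly geometrically ergodic, verify that the spectral gap of $\mathcal{L}_0$ persists under the twist, and — crucially — justify differentiability of $\Lambda$ at $0$ with $\Lambda'(0)=\lambda_1$. Obtaining the uniform-in-$\bar\tau$ control of $\|(D_x\Phi^{\bar\tau}_1)^{\pm1}\|$ that makes $|(D_x\Phi^{\bar\tau}_1)v|^q$ a harmless multiplier is where removing the fixed points $F$ and the attendant choice of weight matter; the near-diagonal Taylor step, by contrast, is routine given the deterministic $C^2$ bound on $\Phi^{\bar\tau}_1$.
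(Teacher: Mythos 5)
The paper does not prove this lemma; it is stated verbatim as Proposition~4.5 of Blumenthal\textendash{}Coti~Zelati\textendash{}Gvalani~\cite{BCZG} and simply cited, so there is no in-paper proof to compare against. Your sketch is, however, a faithful reconstruction of the argument in~\cite{BCZG} (which in turn follows Dolgopyat and Bedrossian\textendash{}Blumenthal\textendash{}Punshon-Smith): the twisted transfer operator $\mathcal{L}_q\varphi(x,v) = \E\bigl[|D_x\Phi^{\bar\tau}_1(x)v|^{q}\varphi(\hat\Phi^{\bar\tau}_1(x,v))\bigr]$, the identification of $\mathcal{L}_0$ with the projective Markov operator and hence a spectral gap, analytic perturbation giving a leading eigenvalue $e^{\Lambda(q)}$ with $\Lambda(0)=0$ and $\Lambda'(0)=\lambda_1$, the choice $q=-p$ for small $p>0$ to make $\gamma = e^{\Lambda(-p)}<1$, and the Taylor-expansion step that replaces $|\Phi_1(y)-\Phi_1(x)|^{-p}$ by $|D_x\Phi_1(x)v|^{-p}|y-x|^{-p}(1+O(|y-x|))$ near the diagonal. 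That is exactly the mechanism behind the cited result, and all the specific ingredients you invoke (a.s.\ deterministic $C^2$ bounds because $\tau_i\le T$, uniform bounds on $\|(D_x\Phi^{\bar\tau}_1)^{\pm1}\|$ since each step is a shear $\begin{pmatrix}1 & \tau\cos(\cdot)\\ 0 & 1\end{pmatrix}$ with $\tau\le T$) are available here.

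One small correction: the uniform bound on $\|(D_x\Phi^{\bar\tau}_1)^{\pm1}\|$ does not require staying away from $F$; the shear Jacobians and their inverses are bounded uniformly on all of $X$ because $\tau\le T$ and $|\cos|\le 1$, so the multiplier $|D_x\Phi_1(x)v|^q$ is harmless globally. The genuine role of removing $F$ and introducing the weight $V$ is not in making $\mathcal{L}_q$ well-defined but in the non-compactness of $X$, which is why the hypothesis in the lemma statement is that the projective chain is uniformly geometrically ergodic (on the $V$-weighted space rather than $C(X)$ of a compact $X$). The paper explicitly flags the gap that remains after this lemma: in~\cite{BCZG} the base manifold is compact, so $W$ alone suffices as a Lyapunov function for the two-point chain, whereas here $W$ must be supplemented by $\alpha(V(x)+V(y))$ to control two-point configurations that are far from the diagonal but close to the fixed-point set $F$. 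Your sketch correctly treats this as the delicate spot, but it is worth being precise that the lemma itself is scoped only to the near-diagonal region $\dist((x,y),\Delta)<s$, and that the non-compactness is resolved in the subsequent proposition of the paper, not inside this lemma.
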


We have verified all the hypotheses for Lemma~\ref{lem:almost-drift-condition} except for positivity of the top Lyapunov exponent, which we verify now. To this end, we use another result of Blumenthal\textendash{}Coti~Zelati\textendash{}Gvalani~\cite{BCZG}, based on Furstenberg's criterion, which provides an easily-verifiable condition for the positivity of $\lambda_1$.

\begin{lem}[a special case of Blumenthal\textendash{}Coti~Zelati\textendash{}Gvalani~\cite{BCZG}, Proposition 3.3]\label{lem:lambda_1-condition}
    Assume that $X$ is uniformly geometrically ergodic and that there are $x \in X$ and $0 \leq \bar{\tau}_\star^n \leq T$ such that the map \[ \Psi_x \colon \bar{\tau}^n \mapsto \Phi^{\bar{\tau}^n}_n(x) \] is a submersion at $\bar{\tau}^n = \bar{\tau}^n_\star$. Also assume that the restriction of $D_{\bar{\tau}^n} D_x \Psi_x$ to $\ker D_{\bar{\tau}^n} \Psi_x$ is surjective with range $T_{D_x \Psi_x(\bar{\tau}^n_\star)} \SL_2(\R)$ at $\bar{\tau}^n = \bar{\tau}^n_\star$.

    Then $\lambda_1 > 0$.
\end{lem}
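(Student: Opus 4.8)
The plan is to reduce the lemma to the general Furstenberg-type criterion of~\cite{BCZG} (Proposition~3.3 there), whose single hypothesis is that the map
$\hat\Psi_x \colon \bar\tau^n \mapsto \bigl(\Phi^{\bar\tau^n}_n(x),\, D_x\Phi^{\bar\tau^n}_n(x)\bigr) \in X \times \SL_2(\R)$
be a submersion at $\bar\tau^n_\star$; note that $D_x\Phi^{\bar\tau^n}_n(x)$, abbreviated $D_x\Psi_x$ in the statement, indeed lies in $\SL_2(\R)$ because each $f_1^\tau$ and $f_2^\tau$ is area-preserving. The one-point and projective chains have already been shown to be uniformly geometrically ergodic (Lemmas~\ref{lem:irreducible-and-small} and~\ref{lem:potential-exists} and the discussion following them), so the remaining hypotheses of Proposition~3.3 of~\cite{BCZG} are in place.

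The key step is the elementary linear-algebra observation that the two conditions in the statement together are equivalent to $\hat\Psi_x$ being a submersion at $\bar\tau^n_\star$. Writing $L_1 := D_{\bar\tau^n}\Psi_x$ and $L_2 := D_{\bar\tau^n}D_x\Psi_x$, both evaluated at $\bar\tau^n_\star$, the differential of $\hat\Psi_x$ there is the pair $(L_1, L_2)$, and a linear map into a direct sum $T_{y_\star}X \oplus T_{M_\star}\SL_2(\R)$ is surjective exactly when $L_1$ maps onto $T_{y_\star}X$ (the first hypothesis) and the restriction $L_2|_{\ker L_1}$ maps onto $T_{M_\star}\SL_2(\R)$ (the second hypothesis), where $y_\star := \Psi_x(\bar\tau^n_\star)$ and $M_\star := D_x\Psi_x(\bar\tau^n_\star)$. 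Hence the hypotheses of the lemma are precisely the hypothesis of Proposition~3.3 of~\cite{BCZG}, and that proposition yields $\lambda_1 > 0$.

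For completeness I would recall the mechanism behind that proposition. Since $D_x\Phi^{\bar\tau}_n \in \SL_2(\R)$ one has $\lambda_1 + \lambda_2 = 0$, so $\lambda_1 \geq 0$; suppose for contradiction that $\lambda_1 = 0$. By the Ledrappier--Furstenberg invariance principle, using that the projective chain is uniformly geometrically ergodic with stationary measure $\Leb \otimes \mathrm{unif}(S^1)$, there is a measurable family $\{\nu_z\}_{z \in X}$ of probability measures on the fibers of $T^1X$ invariant under the derivative cocycle, i.e. $\bigl(D_x\Phi^{\bar\tau^n}_n(z)\bigr)_*\nu_z = \nu_{\Phi^{\bar\tau^n}_n(z)}$ for a.e.\ $z$ and $\bar\tau^n$. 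On the other hand, exactly as in the proof of Lemma~\ref{lem:small-set-condition}, a submersion pushes $\P_n$ forward to a measure with a component absolutely continuous with respect to Lebesgue on a neighborhood, so the law of $\hat\Psi_x(\bar\tau^n)$ has support containing a product set $U \times W$ with $U \ni y_\star$ and $W$ an open subset of $\SL_2(\R)$. Invariance then forces $M_*\nu_x = \nu_{y_\star}$ for every $M \in W$, hence $(M^{-1}M')_*\nu_x = \nu_x$ with $M^{-1}M'$ ranging over an open neighborhood of the identity; so the stabilizer of $\nu_x$ in $\SL_2(\R)$ is open, hence equals $\SL_2(\R)$, which is impossible since the projective line carries no $\SL_2(\R)$-invariant probability measure. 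Therefore $\lambda_1 > 0$.

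The delicate point, which is exactly what Proposition~3.3 of~\cite{BCZG} handles, is the rigorous passage from the almost-everywhere-defined invariant family $\{\nu_z\}$ to the pointwise contradiction at the distinguished base point $x$: this requires a Fubini-type argument respecting the coupling between $\bar\tau$ and $z$, and some care to upgrade the submersion at the single parameter $\bar\tau^n_\star$ to a statement about the a.e.-defined family. Since that is supplied by the cited proposition, the only genuinely new ingredient here is the linear-algebra reduction of the second paragraph, which is routine.
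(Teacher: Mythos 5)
There is a genuine gap, and it is exactly the point that the paper's own proof is about. Proposition~3.3 of~\cite{BCZG} is proven in a setting where the base space is a \emph{compact} Riemannian manifold, whereas here $X = (\R^2/{(2\pi\Z)}^2) \setminus {\{0,\pi\}}^2$ is non-compact, because the fixed points of the dynamics have been removed. Your claim that ``the remaining hypotheses of Proposition~3.3 of~\cite{BCZG} are in place'' is therefore not justified: compactness is \emph{not} in place, and you cannot invoke that proposition as a black box. The entire content of the paper's proof of this lemma is to address this mismatch, by inspecting the proof in~\cite{BCZG} and observing that the arguments only use $\sigma$-compactness of $X$ (together with the uniform geometric ergodicity hypotheses, which in this paper are established on the non-compact $X$ via the unbounded Lyapunov function $V$ controlling excursions toward the removed fixed points). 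Your write-up never confronts this; indeed the parts of the Furstenberg mechanism you defer to the citation (the invariance principle producing the family $\{\nu_z\}$, the passage from the a.e.-defined family to the distinguished point $x$, and the use of the stationary measure on $T^1X$) are precisely the places where compactness is used in the standard arguments, so deferring them does not remove the issue --- it relocates it to a statement that, as cited, does not cover the present setting.

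The rest of your proposal is fine but is not where the work lies. The linear-algebra observation that surjectivity of the pair $(D_{\bar\tau^n}\Psi_x, D_{\bar\tau^n}D_x\Psi_x)$ onto $T_{y_\star}X \oplus T_{M_\star}\SL_2(\R)$ is equivalent to the two stated conditions is correct, and your heuristic for why a submersion forces an open stabilizer of $\nu_x$ in $\SL_2(\R)$, hence a contradiction, is the right picture. To close the gap you would need to either (i) reproduce enough of the proof of Proposition~3.3 of~\cite{BCZG} to check that each step survives with $\sigma$-compactness in place of compactness (this is what the paper does), or (ii) give a self-contained Furstenberg-type argument on the non-compact $X$, in which case the tightness and a.e.-to-pointwise steps you flagged as ``delicate'' must actually be carried out rather than attributed to the cited result.
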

\begin{proof}
    In Blumenthal\textendash{}Coti~Zelati\textendash{}Gvalani~\cite{BCZG}, Lemma~\ref{lem:lambda_1-condition} is proven in a more general setting where $X$ is a compact Riemannian manifold. In our setting, $X$ is not compact. However, inspecting the proof, the exact same arguments go through by assuming only $\sigma$-compactness.
\end{proof}

We now check the last condition for exponential mixing.
\begin{lem}
    The top Lyapunov exponent for $X$, $\lambda_1$, is positive.
\end{lem}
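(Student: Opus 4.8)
The plan is to verify the hypotheses of Lemma~\ref{lem:lambda_1-condition}. We have already established that the one-point chain $X$ is uniformly geometrically ergodic, so it remains to exhibit a point $x_0 \in X$, an integer $n$, and a sequence $\bar\tau^n_\star \in {[0,T]}^{2n}$ such that $\Psi_{x_0}\colon \bar\tau^n \mapsto \Phi^{\bar\tau^n}_n(x_0)$ is a submersion at $\bar\tau^n_\star$ and such that the restriction of $D_{\bar\tau^n}D_x\Psi_{x_0}$ to $\ker D_{\bar\tau^n}\Psi_{x_0}$ surjects onto $T_A\SL_2(\R)$, where $A := D_x\Phi^{\bar\tau^n_\star}_n(x_0)$; here $D_x\Psi_{x_0}$ is the map $\bar\tau^n \mapsto D_x\Phi^{\bar\tau^n}_n(x_0)$, which is $\SL_2(\R)$-valued because the maps $f_i^\tau$ are area-preserving.

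Since $\dim T_{x_0}X = 2$ and $\dim\SL_2(\R) = 3$, a dimension count shows that these two requirements together amount to surjectivity, at $\bar\tau^n = \bar\tau^n_\star$, of the single linear map \[ \bar\tau^n \longmapsto \bigl( D_{\bar\tau^n}\Psi_{x_0},\ D_{\bar\tau^n}\bigl[ D_x\Phi^{\bar\tau^n}_n(x_0) \bigr] \bigr) \in T_{x_0}X \oplus T_A\SL_2(\R) \cong \R^5. \] In particular we need $2n \geq 5$, and I expect $n = 3$ to be enough. The task is then exactly of the kind carried out in Lemma~\ref{lem:irreducible-and-small}, one order of differentiation higher: for a suitable $x_0$ and $\bar\tau^6_\star$, write down the associated $5\times 6$ Jacobian and check that it has rank $5$.

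The first two rows of this Jacobian, namely $D_{\bar\tau^6}\Psi_{x_0}$, are computed from the first variational equation along the orbit $p_0 := x_0$, $p_k := f_1^{\tau_k}(p_{k-1})$ for $k$ odd and $p_k := f_2^{\tau_k}(p_{k-1})$ for $k$ even, exactly as in the earlier computations. The last three rows come from differentiating $D_x\Phi^{\bar\tau^6}_3(x_0) = Df_2^{\tau_6}(p_5)\cdots Df_1^{\tau_1}(p_0)$ in each $\tau_j$; because the orbit points $p_k$ themselves depend on $\bar\tau^6$, this is a second variational computation (Leibniz over the product, contributing the explicit derivative of the $j$-th factor together with, for each later factor, the term obtained by feeding $\partial_{\tau_j}p_{k-1}$ through $D_x(Df_i^{\tau_k})$). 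I expect this bookkeeping, together with choosing the data so that the rank really is $5$, to be the only genuine work. The naive symmetric choice $x_0 = (\pi/2,\pi/2)$ and $\bar\tau^6_\star = (\pi,\dots,\pi)$ does \emph{not} work: that orbit stays on $\{\pi/2, 3\pi/2\}^2$, where every $Df_i^{\tau}(p_k)$ equals the identity, so no conjugation occurs and each contribution to $D_{\bar\tau^6}[D_x\Phi_3]$ is strictly triangular, which leaves the diagonal direction of $\mathfrak{sl}_2(\R)$ unreached. One must instead choose $x_0$ and $\bar\tau^6_\star$ whose orbit passes through a point with $\cos$ of the first coordinate nonzero at an even step and a point with $\cos$ of the second coordinate nonzero at an odd step, so that the resulting upper- and lower-triangular generators, conjugated by the intervening Jacobians, span all of $\mathfrak{sl}_2(\R)$; with such a choice a direct computation shows the $5\times 6$ Jacobian has full rank, and Lemma~\ref{lem:lambda_1-condition} gives $\lambda_1 > 0$.
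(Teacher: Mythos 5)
Your proposal follows essentially the same route as the paper: it verifies the hypotheses of Lemma~\ref{lem:lambda_1-condition} with $n=3$ via a second-variation computation, and your reformulation as surjectivity of a single $5\times 6$ Jacobian is equivalent to the paper's check that $D_{\bar\tau^n}\Psi_x$ has full rank and that $D_{\bar\tau^n}D_x\Psi_x$ restricted to $\ker D_{\bar\tau^n}\Psi_x$ has rank $3$. Your structural observations are also correct---the all-$\pi$ choice at $(\pi/2,\pi/2)$ does fail for the reason you give, and the paper's explicit witness $x=(\pi/2,\pi/2)$ with $\bar\tau^6_\star=(\pi/2,\pi,\pi,\pi,\pi/2,\pi/2)$, whose orbit passes through points with nonvanishing cosine, supplies exactly the full-rank computation you leave as a direct check.
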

\begin{proof}
    We verify the hypotheses of Lemma~\ref{lem:lambda_1-condition}. Let $x := (\frac{\pi}{2}, \frac{\pi}{2})$ and $n := 3$ with $\bar{\tau}^n_\star := (\frac{\pi}{2}, \pi, \pi, \pi, \pi/2, \pi/2)$. Then we compute \[ D_{\bar{\tau}^n} \Psi_x(\bar{\tau}^n_\star) = \begin{pmatrix} 1 & 0 & 1 & 0 & 1 & 0\\ 0 & 0 & \pi & 0 & 0 & 1\end{pmatrix} \] and \[ D_{\bar{\tau}^n} D_x \Psi_x(\bar{\tau}^n_\star) = \begin{pmatrix} -\pi^3 & 0 & 0 & 0 & 0 & 0\\ \pi^2 & 0 & -\frac{\pi^2}{2} & 0 & 0 & 0\\ -\frac{\pi}{2} - \pi^4 & -1 & -\frac{\pi}{2} & 1 & -\frac{\pi}{2} & 0\\ \pi^3 & 0 & 0 & 0 & 0 & 0\end{pmatrix}, \] where we make the identification \[ \begin{pmatrix} a & b\\ c & d \end{pmatrix} \mapsto \begin{pmatrix} a\\ b\\ c\\ d\end{pmatrix} \] for the range of $D_x \Psi_x$. We note that $D_{\bar{\tau}^n} \Psi_x(\bar{\tau}^n_\star)$ has full rank as desired, and has kernel spanned by the columns of the matrix \[ K := \begin{pmatrix} 1 & 0 & 0 & 0\\ 0 & 1 & 0 & 0\\ 0 & 0 & 1 & 0\\ 0 & 0 & 0 & 1\\ -1 & 0 & -1 & 0\\ 0 & 0 & -\pi & 0\end{pmatrix}. \] Finally, we note that \[ (D_{\bar{\tau}^n} D_x \Psi_x(\bar{\tau}^n_\star))K = \begin{pmatrix} -\pi^3 & 0 & 0 & 0\\ \pi^2 & 0 & \frac{-\pi^2}{2} & 0\\ -\pi^4 & -1 & 0 & 1\\ \pi^3 & 0 & 0 & 0\end{pmatrix}, \] which has rank $3 = \dim \SL_2(\R)$ as desired.
\end{proof}

From Lemma~\ref{lem:almost-drift-condition}, it follows that $W$ satisfies the inequality for the drift condition near the diagonal. Now, we modify $W$ to account for the fixed points.
\begin{prop}
    For sufficiently small $\alpha > 0$, the function $W'(x, y) := W(x, y) + \alpha(V(x) + V(y))$ satisfies the Lyapunov\textendash{}Foster drift condition for the two-point chain, where $W$ is given by Lemma~\ref{lem:almost-drift-condition} and $V$ is given by Lemma~\ref{lem:potential-exists}.
\end{prop}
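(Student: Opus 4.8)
The plan is to simply \emph{add} the two drift estimates we already have: Lemma~\ref{lem:almost-drift-condition} controls $W$ near the diagonal $\Delta$ (where $V$ is useless, since $W$ is bounded there), and Lemma~\ref{lem:potential-exists} controls $V$ near the fixed points (where $W$ is bounded but blows up nowhere). The only genuinely new ingredient needed to glue them is a crude uniform bound: since $D_xf_i^\tau$ and its inverse have operator norm at most $1+\tau\le 1+T$, each $f_i^\tau$ is $(1+T)$-bi-Lipschitz on the torus and fixes $F$ pointwise, so $\Phi^{\bar{\tau}}_1=f_2^{\tau_2}\circ f_1^{\tau_1}$ is $L_0$-bi-Lipschitz with $L_0:=(1+T)^2$ \emph{deterministically}. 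Hence $d(\Phi^{\bar{\tau}}_1(x),\Phi^{\bar{\tau}}_1(y))\ge d(x,y)/L_0$ always, so whenever $\dist((x,y),\Delta)\ge s$ the quantity $W(1\cdot(x,y))$ is bounded above by a deterministic constant $b_W=b_W(s,T,p)$; combined with Lemma~\ref{lem:almost-drift-condition} this gives the \emph{global} bound $\E[W(1\cdot(x,y))]\le\gamma W(x,y)+b_W$ for every $(x,y)\in X^2\setminus\Delta$.

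Next I would record the consequence of Lemma~\ref{lem:potential-exists}: there are $c_0\in(0,1)$, $b_V>0$ and a compact $C_0\subseteq X$ with $\E[V(1\cdot z)]\le c_0V(z)+b_V\mathds{1}_{C_0}(z)$ for all $z\in X$ (the contraction near each fixed point together with the boundedness of $V$ and of $\E[V(1\cdot\,\cdot)]$ on the complement of small neighbourhoods of $F$, using again that $\Phi^{\bar{\tau}}_1$ fixes $F$ and is deterministically bi-Lipschitz), whence $\E[\alpha(V(1\cdot x)+V(1\cdot y))]\le\alpha c_0(V(x)+V(y))+2\alpha b_V$. Adding this to the previous display,
\[ \E[W'(1\cdot(x,y))]\le\gamma W(x,y)+\alpha c_0(V(x)+V(y))+b_W+2\alpha b_V \]
for all $(x,y)\in X^2\setminus\Delta$. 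Now fix any $\gamma'\in(\max\{\gamma,c_0\},1)$ and rewrite the right-hand side as $\gamma'W'(x,y)-D(x,y)+(b_W+2\alpha b_V)$, where $D(x,y):=(\gamma'-\gamma)W(x,y)+(\gamma'-c_0)\alpha(V(x)+V(y))\ge0$. Then the Lyapunov--Foster inequality $\E[W'(1\cdot(x,y))]\le\gamma'W'(x,y)$ holds at every point where $D(x,y)\ge b_W+2\alpha b_V$, i.e.\ off the set $C':=\{D\le b_W+2\alpha b_V\}$. Since $W\to\infty$ as $(x,y)\to\Delta$ and $V\to\infty$ as $x$ or $y$ approaches $F$, the set $C'$ is closed and bounded away from both $\Delta$ and $(F\times X)\cup(X\times F)$, hence a compact subset of $X^2\setminus\Delta$; moreover on $C'$ both $W(x,y)$ and $V(x)+V(y)$ are bounded, so $W'$ and $\E[W'(1\cdot(\,\cdot\,))]$ are bounded on $C'$, and the inequality $\E[W'(1\cdot(x,y))]\le\gamma'W'(x,y)+b'\mathds{1}_{C'}$ holds there trivially for a suitable constant $b'$. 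This is exactly the Lyapunov--Foster drift condition for $W'$.

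The place where smallness of $\alpha$ is used is in making the near-diagonal part honest: one wants the true contraction $(\gamma'-\gamma)W(x,y)$ to absorb the additive error $2\alpha b_V$ coming from the imperfect drift of $V$, which amounts to taking $\alpha$ small enough that $2\alpha b_V$ is dominated by $(\gamma'-\gamma)$ times the (positive) infimum of $W$ over the collar $\{\dist((x,y),\Delta)<s\}$; this keeps $C'$ a bounded distance from $\Delta$ and hence keeps $W'$ bounded on $C'$. I do not expect a serious obstacle beyond this bookkeeping; the real content is the deterministic bi-Lipschitz observation, which decouples the analysis near $\Delta$ (done by $W$) from the analysis near the fixed points (done by $V$) so that the two estimates can simply be summed. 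The one point requiring care is verifying that $C'$ is compact \emph{inside} the open manifold $X^2\setminus\Delta$ — that it avoids both the diagonal and the fixed-point locus — which is exactly what the blow-up of $W$ at $\Delta$ and of $V$ at $F$ guarantees.
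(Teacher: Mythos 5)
Your proof is correct, but it takes a genuinely different route from the paper. The paper argues by an explicit case analysis on the position of $(x,y)$ relative to the fixed-point set $F$ and the diagonal collar $\{\dist((x,y),\Delta)<s\}$, tuning constants $\eta$, $\varepsilon$, $\beta$, $\rho$ and the coefficient $\alpha$ so that in each mixed case the genuine contraction of one summand absorbs the (deterministically bounded) behaviour of the other; this is where ``sufficiently small $\alpha$'' enters there. You instead upgrade both inputs to \emph{global} drift inequalities with additive constants --- $\E[W(1\cdot(x,y))]\le\gamma W(x,y)+b_W$ via the deterministic $(1+T)^2$-bi-Lipschitz bound off the diagonal, and $\E[V(1\cdot z)]\le c_0V(z)+b_V$ via the same bi-Lipschitz bound plus the fact that $\Phi_1^{\bar\tau}$ fixes $F$ pointwise (this parenthetical is needed, since the paper's Lemma~\ref{lem:potential-exists} only details the contraction near $F$) --- then sum them and take the compact set to be the sublevel set $C'$ of the excess term $D$. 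This is the standard ``sum of two Lyapunov functions'' argument, and the compactness of $C'$ inside $X^2\setminus\Delta$ follows, as you say, from the blow-up of $W$ at $\Delta$ and of $V$ at the fixed-point locus; the deterministic bi-Lipschitz observation plays the role of the paper's quantities $\beta$ and $\rho$. Two small remarks: your opening parenthetical (``$V$ is useless near $\Delta$ since $W$ is bounded there'') is garbled --- near the diagonal $W$ blows up and $V$ need not be bounded, though this does not affect the argument, since both contractions hold simultaneously there --- and your closing explanation of where smallness of $\alpha$ is used is not actually needed: as written, your argument yields the drift condition for \emph{every} $\alpha>0$ (the bound on $W$ over $C'$, namely $(b_W+2\alpha b_V)/(\gamma'-\gamma)$, is finite for each fixed $\alpha$, so $C'$ stays away from $\Delta$ regardless), which of course still proves the stated proposition.
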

\begin{proof}
    Let $p, s, \varphi, \gamma$ be given by Lemma~\ref{lem:almost-drift-condition}. Without relabeling, let $\gamma$ be large enough to satisfy the drift condition inequality for $V$ (so $\gamma \geq 1-10^{-4}$, as in Lemma~\ref{lem:potential-exists}). Let $r_0$ be as in the proof of Lemma~\ref{lem:potential-exists} and define $C := \{(x, y) \in X^2 \mid |x-y| \geq s \text{ and } \dist(\{x, y\}, F) \geq \varepsilon\}$ where $F := {\{0, \pi\}}^2$ is the set of fixed points and $\varepsilon > 0$ will be chosen later in the proof. To start, let $\eta \in (0, r_0)$ be chosen small enough so that $\dist(1 \cdot^{\bar{\tau}} x, F) < \eta$ implies $\dist(x, F) < \frac{s}{2}$ for any $0 \leq \bar{\tau} \leq T$, and let $\varepsilon \leq \eta$. We need to show that, if $(x, y) \not\in C$, then \[ \E[W'(1 \cdot (x, y))] \leq \gamma'W'(x, y), \] where $0 < \gamma' < 1$. Without loss of generality, there are three cases to consider.
    \begin{enumerate}
        \item If $\dist(x, F), \dist(y, F) < \eta$, then $|x-y| < s$ and therefore \[ \E[W(1 \cdot (x, y)) + \alpha(V(1 \cdot x) + V(1 \cdot y))] \leq \gamma\left(W(x, y) + \alpha(V(x) + V(y))\right) \] by linearity of expectation.
        \item If $\dist(x, F) < \eta$ but $\dist(y, F) \geq \eta$, then there are two sub-cases to consider.
            \begin{enumerate}
                \item If $|x-y| < s$, then by choosing $\beta := \min \{\dist(1 \cdot z, F) \mid 0 \leq \bar{\tau} \leq T \text{ and } \dist(z, F) \geq \eta \}$, $\gamma' := (\gamma+1)/2$ and $\alpha := (\gamma' - \gamma)\beta^{1/20}s^{-p}\min \psi$ we compute
                    \begin{align*}
                        \E[W(1 \cdot (x, y)) + \alpha(V(1 \cdot x) + V(1 \cdot y))] &\leq \gamma W(x, y) + \gamma \alpha V(x) + \alpha \E[V(1 \cdot y)]\\
                        &\leq \gamma W(x, y) + \gamma \alpha V(x) + \frac{\alpha}{2} \beta^{-1/20}\\
                        &\leq \gamma' W(x, y) + \gamma' \alpha V(x) + \gamma' \alpha V(y),
                    \end{align*}
                    as desired.
                \item Otherwise, $\dist(x, F) < \varepsilon$. Let $\rho := \min \{|(1 \cdot x)-(1 \cdot y)| \mid 0 \leq \bar{\tau} \leq T \text{ and } |x-y| \geq s\}$ and $\beta$ as in the previous subcase, choose \[ \varepsilon := {\left(\frac{(\gamma'-\gamma)\alpha}{\rho^{-p} \max \psi + \beta^{-1/20}}\right)}^{20}, \] and compute
                    \begin{align*}
                        \E[W(1 \cdot (x, y)) + \alpha(V(1 \cdot x) + V(1 \cdot y))] &\leq \E[W(1 \cdot (x, y))] + \gamma \alpha V(x) + \alpha \E[V(1 \cdot y)]\\
                        &\leq \E[W(1 \cdot (x, y))] + \gamma \alpha V(x) + \alpha \E[V(1 \cdot y)]\\
                        &\leq \gamma' W(x, y) + \gamma' \alpha V(x) + \gamma' \alpha V(y),
                    \end{align*}
                    as desired.
            \end{enumerate}
        \item If $\dist(x, F), \dist(y, F) \geq \eta$, then $|x-y| < s$ (since we assume $(x, y) \not\in C$) and so we handle this case the same as subcase (2a) (but for both $x$ and $y$ instead of only $y$).
    \end{enumerate}
\end{proof}

We conclude from Theorem~\ref{thm:abstract-harris} that the two-point chain is uniformly geometrically ergodic. By Proposition~\ref{prop:uge-implies-mixing}, the maps ${\{\Phi^{\bar{\tau}}_n\}}_n$ mix exponentially in Bressan's sense for almost every choice of $\bar{\tau}$.

\section{Subexponential mixing with large deterministic durations}
\begin{figure}[htb]
    \centering
    \includegraphics{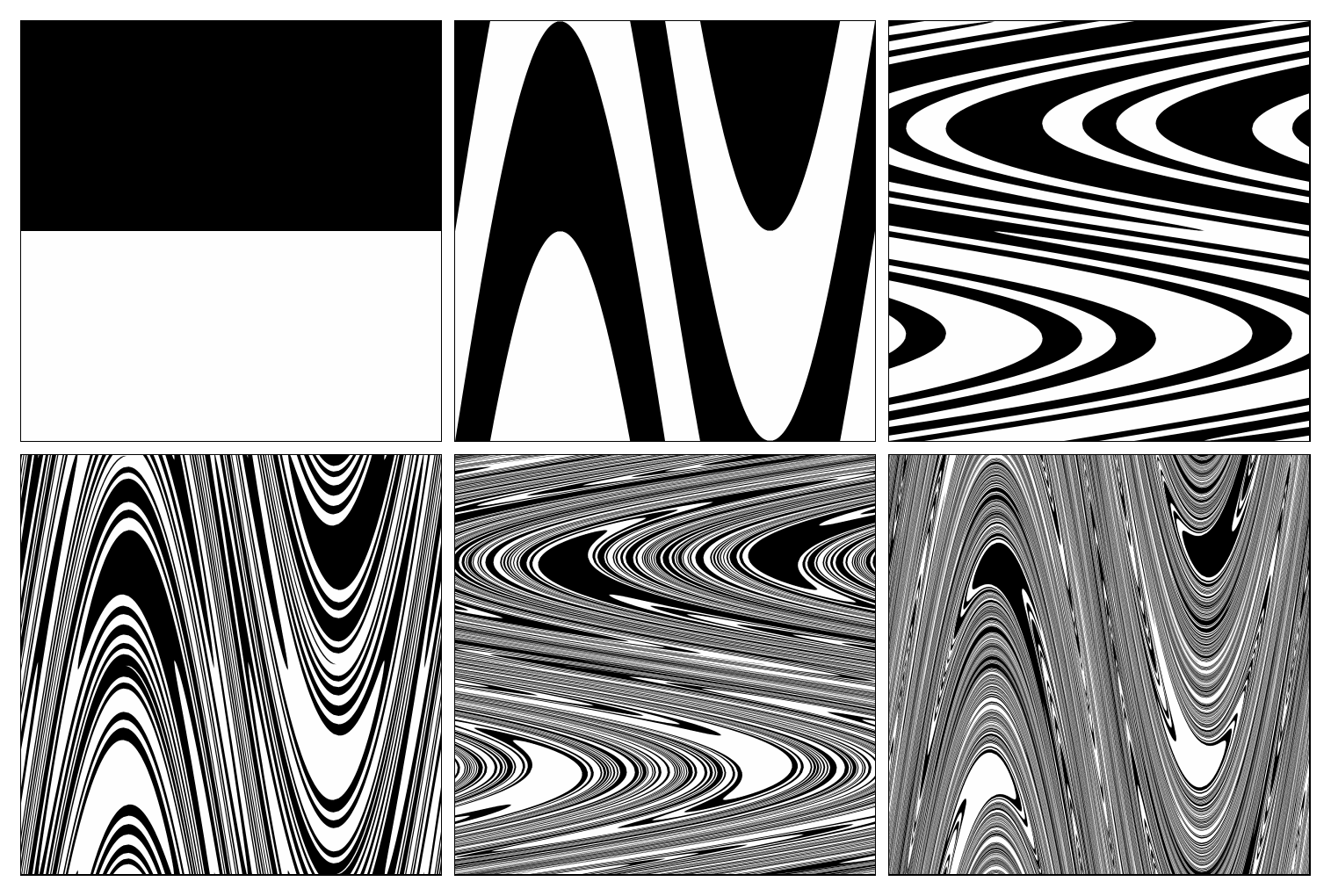}
    \caption{The flow for $T = 2\pi$ at $n=1,2,3$ and intermediate stages. The flow mixes exponentially away from the four clumps at the extrema of the sine fields, but the clump sizes shrink algebraically.}
\end{figure}

Last, we show that simply running each sine field for a sufficiently long duration is not sufficient to ensure mixing. Indeed, taking the notation of the previous section, let $T \in 2\pi\N$ and let $\tau_n = T$ for all $n$. Then the point $p := (\frac{\pi}{2}, \frac{\pi}{2})$ is a fixed point of $\Phi_1^{\bar{\tau}}$, and $D_x \Phi_1^{\bar{\tau}}(p) = 0$. Let $0 < h < 1 < C$ be such that \[ |\Phi_1^{\bar{\tau}}(x)-p| \leq |x-p| + C|x-p|^2 \quad \text{for all $x \in B_h(p)$.} \] Since $\bar{\tau}$ is constant, we have \[ \Phi_n^{\bar{\tau}} = \underbrace{\Phi_1^{\bar{\tau}} \circ \Phi_1^{\bar{\tau}} \circ \dots \circ \Phi_1^{\bar{\tau}}}_{n\text{ times}}. \] It follows that, if $x \in B_{\frac{h}{enC}}(p)$, then $\Phi_n^{\bar{\tau}}(x) \in B_h(p)$. By symmetry, the same holds for ${(\Phi_1^{\bar{\tau}})}^{-1}$, so plugging $B = B_{\frac{h}{enC}}(p)$ into the definition of the mixing scale shows that \[ \mix(b^{\bar{\tau}}_\alpha) \geq c\alpha^{-1} \geq c\|D_x b^{\bar{\tau}}_\alpha\|_{L^1}^{-1}. \]

\bibliographystyle{plain}
\bibliography{mixing}

\begin{thebibliography}{10}

\bibitem{Bianchini}
Stefano Bianchini.
\newblock On {Bressan}'s conjecture on mixing properties of vector fields.
\newblock {\em Banach Center Publications}, 74:13--31, 2006.
\newblock Publisher: Instytut Matematyczny Polskiej Akademii Nauk.

\bibitem{BCZG}
Alex Blumenthal, Michele~Coti Zelati, and Rishabh~S. Gvalani.
\newblock Exponential mixing for random dynamical systems and an example of
  {Pierrehumbert}.
\newblock (arXiv:2204.13651), April 2022.
\newblock arXiv:2204.13651 [math] type: article.

\bibitem{Bressan}
Alberto Bressan.
\newblock A lemma and a conjecture on the cost of rearrangements.
\newblock {\em Rendiconti del Seminario Matematico della Università di
  Padova}, 110:97--102, 2003.

\bibitem{Crippa-de-Lellis}
Gianluca Crippa and Camillo~de Lellis.
\newblock Estimates and regularity results for the {DiPerna}-{Lions} flow.
\newblock 2008(616):15--46, March 2008.
\newblock Publisher: De Gruyter Section: Journal für die reine und angewandte
  Mathematik.

\bibitem{CrippaLucaSchulze}
Gianluca Crippa, Renato Lucà, and Christian Schulze.
\newblock Polynomial mixing under a certain stationary {Euler} flow.
\newblock {\em Physica D: Nonlinear Phenomena}, 394:44--55, July 2019.

\bibitem{Anomalous}
Theodore~D. Drivas, Tarek~M. Elgindi, Gautam Iyer, and In-Jee Jeong.
\newblock Anomalous {Dissipation} in {Passive} {Scalar} {Transport}.
\newblock {\em Archive for Rational Mechanics and Analysis}, 243(3):1151--1180,
  March 2022.

\bibitem{HSSS}
Mahir Hadžić, Andreas Seeger, Charles~K. Smart, and Brian Street.
\newblock Singular integrals and a problem on mixing flows.
\newblock {\em Annales de l'Institut Henri Poincaré C, Analyse non linéaire},
  35(4):921--943, July 2018.

\bibitem{IyerKiselevXu}
Gautam Iyer, Alexander Kiselev, and Xiaoqian Xu.
\newblock Lower bounds on the mix norm of passive scalars advected by
  incompressible enstrophy-constrained flows.
\newblock {\em Nonlinearity}, 27(5):973--985, April 2014.
\newblock Publisher: IOP Publishing.

\bibitem{Kifer}
Yuri Kifer.
\newblock {\em Ergodic theory of random transformations}, volume~10 of {\em
  Progress in Probability and Statistics}.
\newblock Birkh\"{a}user Boston, Inc., Boston, MA, 1986.

\bibitem{De-Lellis}
Camillo~De Lellis.
\newblock {ODEs} with {Sobolev} coefficients: {The} eulerian and the lagrangian
  approach.
\newblock {\em Discrete and Continuous Dynamical Systems - S}, 1(3):405, 2008.

\bibitem{OptimalStirring}
Evelyn Lunasin, Zhi Lin, Alexei Novikov, Anna Mazzucato, and Charles~R.
  Doering.
\newblock Optimal mixing and optimal stirring for fixed energy, fixed power, or
  fixed palenstrophy flows.
\newblock {\em Journal of Mathematical Physics}, 53(11):115611, November 2012.
\newblock Publisher: American Institute of Physics.

\bibitem{MathewMezicPetzold}
George Mathew, Igor Mezić, and Linda Petzold.
\newblock A multiscale measure for mixing.
\newblock {\em Physica D: Nonlinear Phenomena}, 211(1):23--46, November 2005.

\bibitem{Pierrehumbert}
R.~T. Pierrehumbert.
\newblock Tracer microstructure in the large-eddy dominated regime.
\newblock {\em Chaos, Solitons \& Fractals}, 4(6):1091--1110, June 1994.

\bibitem{Multilinear}
Andreas Seeger, Charles Smart, and Brian Street.
\newblock {\em Multilinear {Singular} {Integral} {Forms} of {Christ}-{Journé}
  {Type}}, volume 257 of {\em Memoirs of the {American} {Mathematical}
  {Society}}.
\newblock American Mathematical Society, January 2019.
\newblock ISSN: 0065-9266, 1947-6221 Issue: 1231.

\bibitem{ThiffeaultMultiscale}
Jean-Luc Thiffeault.
\newblock Using multiscale norms to quantify mixing and transport.
\newblock {\em Nonlinearity}, 25(2):R1--R44, January 2012.
\newblock Publisher: IOP Publishing.

\bibitem{Zillinger}
Christian Zillinger.
\newblock Linear inviscid damping for monotone shear flows.
\newblock {\em Transactions of the American Mathematical Society},
  369(12):8799--8855, December 2017.

\bibitem{ZillingerComparison}
Christian Zillinger.
\newblock On geometric and analytic mixing scales: comparability and
  convergence rates for transport problems.
\newblock {\em Pure and Applied Analysis}, 1(4):543--570, October 2019.
\newblock Publisher: Mathematical Sciences Publishers.

\end{thebibliography}
\end{document}